\documentclass[11pt]{article}
\usepackage{epigamath}

\usepackage[notext]{kpfonts}
\usepackage{baskervald}

\setpapertype{A4}


\usepackage[english]{babel}

\usepackage[dvips]{graphicx}     



\removelength{1cm}

\title{Higher rank sheaves on threefolds and functional equations}
\titlemark{Higher rank sheaves and functional equations}
\author{Amin Gholampour and Martijn Kool}
\authoraddresses{
\authordata{Amin Gholampour}{\firstname{Amin} \lastname{Gholampour}\\
\institution{Department of Mathematics, University of Maryland, College Park, MD 20742-4015, USA}\\
\email{amingh@umd.edu}} \\
\authordata{Martijn Kool}{\firstname{Martijn} \lastname{Kool}\\
\institution{Mathematical Institute, Utrecht University,
3584 CD Utrecht, The Netherlands}\\
\email{m.kool1@uu.nl}}
}
\authormark{A. Gholampour and M. Kool}
\date{\vspace{-5ex}} 
\journal{\'Epijournal de G\'eom\'etrie Alg\'ebrique} 
\acceptation{Received by the Editors on March 15, 2018, and in final form on April 26, 2019. \\ Accepted on October 1, 2019.}



\usepackage[all]{xy}

\allowdisplaybreaks

\setlength{\emergencystretch}{2em}

\newdimen\origiwspc
\origiwspc=\fontdimen2\font

\newcommand{\extendspace}[1]{\fontdimen2\font=1.2ex #1 \fontdimen2\font=\origiwspc}





\usepackage{float}



\makeatletter

\renewcommand{\p@equation}{\arabic{equation}\expandafter\@gobble}
\makeatother




\newtheorem{theorem}{Theorem}[section]

\newtheorem{proposition}[theorem]{Proposition}
\newtheorem{lemma}[theorem]{Lemma}

\newtheorem{remark}[theorem]{Remark}

\DeclareFontFamily{OT1}{rsfs}{}
\DeclareFontShape{OT1}{rsfs}{n}{it}{<-> rsfs10}{}
\DeclareMathAlphabet{\curly}{OT1}{rsfs}{n}{it}

\renewcommand\O{\mathcal O}
\newcommand\PP{\mathbb P}

\newcommand\cE{\mathcal E}
\newcommand\F{\mathcal F}
\newcommand\cK{\mathcal K}
\newcommand\T{\mathcal T}

\newcommand\C{\mathbb C}
\newcommand\cC{\mathcal C}

\newcommand\sfM{\mathsf M}
\newcommand\Q{\mathbb Q}
\newcommand\cQ{\mathcal Q}

\newcommand\ccR{\mathcal R}
\newcommand\Z{\mathbb Z}

\newcommand\Quot{\mathrm{Quot}}
\newcommand\onto{\mathrm{onto}}
\newcommand\pure{\mathrm{pure}}

\newcommand\PT{\mathrm{PT}}

\newcommand\ch{\operatorname{ch}}

\newcommand\Hom{\operatorname{Hom}}
\renewcommand\hom{\curly H\!om}
\newcommand\Ext{\operatorname{Ext}}
\newcommand\ext{\curly Ext}

\newcommand\Proj{\operatorname{Proj}\,}
\newcommand\Spec{\operatorname{Spec}\,}
\newcommand\Hilb{\operatorname{Hilb}}

\newcommand\Sym{\operatorname{Sym}}

\newcommand\mdot{{\scriptscriptstyle\bullet}}

\newcommand\INTO{\ar@{^{(}->}[r]}

\setcounter{secnumdepth}{2}
\DeclareRobustCommand{\SkipTocEntry}[4]{}

\newcommand{\qedhere}{}


\begin{document}


\maketitle



\begin{prelims}

\vspace{-0.55cm}

\def\abstractname{Abstract}
\abstract{We consider the moduli space of stable torsion free sheaves of any rank on a smooth projective threefold. The singularity set of a torsion free sheaf is the locus where the sheaf is not locally free. On a threefold it has dimension $\leq 1$. We consider the open subset of moduli space consisting of sheaves with empty or 0-dimensional singularity set. \\
For fixed Chern classes $c_1,c_2$ and summing over $c_3$, we show that the generating function of topological Euler characteristics of these open subsets equals a power of the MacMahon function times a Laurent polynomial. This Laurent polynomial is invariant under $q \leftrightarrow q^{-1}$ (upon replacing $c_1 \leftrightarrow -c_1$). For some choices of $c_1,c_2$ these open subsets equal the entire moduli space. \\
The proof involves wall-crossing from Quot schemes of a higher rank reflexive sheaf to a sublocus of the space of Pandharipande-Thomas pairs. We interpret this sublocus in terms of the singularities of the reflexive sheaf.}

\keywords{Quot schemes on 3-folds, higher rank DT/PT invariants, Hall algebras}

\MSCclass{14C05, 14F05, 14H50, 14J30, 14N35}


\languagesection{Fran\c{c}ais}{%

\vspace{-0.05cm}
{\bf Titre. Faisceaux de rang sup\'erieur sur les solides et \'equations fonctionnelles} \commentskip {\bf R\'esum\'e.} Nous consid\'erons l'espace de modules des faisceaux stables et sans torsion de rang quelconque sur un solide projectif. L'ensemble singulier d'un faisceau sans torsion est le lieu o\`u ce faisceau n'est pas localement libre. Sur un solide, ce lieu est de dimension $\leq 1$. Nous consid\'erons l'ouvert de l'espace de modules constitu\'e par les faisceaux de lieu singulier vide ou de dimension nulle. \\
Pour des classes de Chern $c_1,\,c_2$ fix\'ees et en sommant sur $c_3$, nous montrons que la fonction g\'en\'eratrice des caract\'eristiques d'Euler topologiques de ces ouverts est \'egale au produit d'une puissance de la fonction de MacMahon par un polyn\^ome de Laurent. Ce dernier est invariant par $q \leftrightarrow q^{-1}$ (quitte \`a remplacer $c_1$ par $-c_1$). Pour certains choix de $c_1,c_2$, ces ouverts co\"{\i}ncident avec l'espace de modules tout entier. \\
La d\'emonstration utilise des travers\'ees de murs \`a partir des sch\'emas Quot d'un faisceau r\'eflexif de rang sup\'erieur vers un sous-lieu de l'espace des couples de Pandharipande-Thomas. Nous interpr\'etons ce sous-lieu en termes des singularit\'es du faisceau r\'eflexif.}

\end{prelims}


\newpage

\setcounter{tocdepth}{1} \tableofcontents

\section{Introduction} \label{intro} 

Let $X$ be a smooth projective threefold. We consider torsion free sheaves $\F$ of homological dimension $\leq 1$ on $X$, i.e.~torsion free sheaves which are locally free or can be resolved by a 2-term complex of locally free sheaves. They have the property that $\ext^{>1}(\F,\O_X) = 0$ and $\ext^1(\F,\O_X)$ has dimension $\leq 1$. Important examples are reflexive sheaves. They are precisely the torsion free sheaves $\F$ of homological dimension $\leq 1$ for which $\ext^1(\F,\O_X)$ is zero or 0-dimensional. We refer to \cite[Prop.~1.1.10]{HL} for details.

We first study Quot schemes $\Quot_X(\F,n)$ 
of length $n$ quotients $\F \twoheadrightarrow \cQ$. We start with some notation. The reciprocal of a polynomial $P(q)$ of degree $d$ is 
$$
P^*(q) := q^d P(q^{-1}).
$$ 
Moreover $P$ is called palindromic when $P^*(q) = P(q)$. We denote the MacMahon function by
$$
\sfM(q) := \prod_{n=1}^{\infty} \frac{1}{(1-q^n)^n}.
$$
Furthermore $e(\cdot)$ denotes topological Euler characteristic.

\begin{theorem} \label{main}
For any rank $r$ torsion free sheaf $\F$ of homological dimension $\leq 1$ on a smooth projective threefold $X$, we have
\begin{equation*} 
\sum_{n=0}^{\infty} e\big(\Quot_X(\F,n)\big) \, q^n = \sfM(q)^{r e(X)} \sum_{n=0}^{\infty} e\big(\Quot_X(\ext^1(\F,\O_X),n)\big) \, q^n.
\end{equation*}
\end{theorem}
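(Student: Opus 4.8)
The plan is to compute $Z_\F(q) := \sum_{n} e(\Quot_X(\F,n))\,q^n$ by a local-to-global argument based on the power structure on the Grothendieck ring of varieties. Sending a length-$n$ quotient to the support of the quotient defines a map $\Quot_X(\F,n)\to\Sym^n(X)$ whose fibre over a reduced configuration $\sum_i n_i[x_i]$ is the product of punctual Quot schemes $\prod_i\Quot_X(\F,n_i)_{x_i}$. Stratifying $X = \bigsqcup_\alpha S_\alpha$ by the local isomorphism type of $\F$, so that the punctual series $Z_{\F,x}(q) := \sum_n e(\Quot_X(\F,n)_x)\,q^n$ is constant equal to some $A_\alpha(q)$ on $S_\alpha$, the motivic nature of $e$ and the power structure give $Z_\F(q) = \prod_\alpha A_\alpha(q)^{e(S_\alpha)}$. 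The same support factorisation applies to $\mathcal G := \ext^1(\F,\O_X)$, which is supported on $\Sing(\F)$ of dimension $\leq 1$. In these terms the entire theorem reduces to the \emph{pointwise} identity $Z_{\F,x}(q) = \sfM(q)^{r}\,Z_{\mathcal G,x}(q)$ for every $x\in X$.

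Away from $\Sing(\F)$ the sheaf $\F$ is locally free of rank $r$, so on $U = X\setminus\Sing(\F)$ the punctual series of $\F$ agrees with that of $\O_X^{\oplus r}$. The basic local computation $\sum_n e(\Quot_{\C^3}(\O^{\oplus r},n)_0)\,q^n = \sfM(q)^{r}$, which for $r=1$ is the Euler characteristic of the punctual Hilbert scheme of $\C^3$ and in general follows by the same torus-fixed-point count, gives $Z_{\F,x}(q) = \sfM(q)^{r}$ and $Z_{\mathcal G,x}(q)=1$ for $x\in U$, so the pointwise identity is trivial there and integrates over $U$ to the factor $\sfM(q)^{r\,e(U)}$. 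The content is therefore concentrated on the one-dimensional locus $\Sing(\F)$, where $\mathcal G_x\neq 0$.

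The heart of the argument, and where I expect the main obstacle, is the pointwise identity at a singular point $x$. Since $\F$ has homological dimension $\leq 1$, locally there is a presentation $0\to\O^{\oplus a}\to\O^{\oplus b}\to\F\to 0$ with $b-a=r$ and $\mathcal G_x \cong \coker$ of the transposed map. The goal is to factor the punctual Quot series into a ``free'' contribution $\sfM(q)^{r}$, coming from the $r$ locally free directions of $\F$, and a ``defect'' contribution $Z_{\mathcal G,x}(q)$ recording the failure of local freeness. I would establish this by a torus-equivariant analysis: degenerating the presentation matrix to a monomial form compatible with a $T$-action on $X$ near $x$, the punctual Quot scheme acquires a torus action whose fixed loci are governed by combinatorial data, and Bialynicki--Birula / localisation reduces $e(\Quot_X(\F,n)_x)$ to a sum over those data. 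The combinatorics should split as a free box-counting part reproducing $\sfM(q)^{r}$ and a constrained part counting configurations cut out by $\coker$ of the transpose, reproducing $Z_{\mathcal G,x}(q)$; invariance of $e$ under the degeneration then transports the identity back to the original presentation. Showing that this splitting is exact, and that the degeneration preserves the relevant Euler characteristics, is the crux.

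Finally I assemble the pieces. Multiplying the pointwise identities over the stratification through the power structure gives $Z_\F(q) = \big(\prod_\alpha \sfM(q)^{r\,e(S_\alpha)}\big)\cdot\big(\prod_\alpha Z_{\mathcal G,x_\alpha}(q)^{e(S_\alpha)}\big)$; the first factor collapses to $\sfM(q)^{r\,e(X)}$ by additivity of $e$ over $X = U\sqcup\Sing(\F)$, while the second is precisely $Z_{\mathcal G}(q)=\sum_n e(\Quot_X(\mathcal G,n))\,q^n$ by the support factorisation for $\mathcal G$. This yields $Z_\F(q)=\sfM(q)^{r\,e(X)}\,Z_{\mathcal G}(q)$. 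As consistency checks: for $\F$ locally free one has $\mathcal G=0$, $Z_{\mathcal G}=1$, recovering $\sfM(q)^{r\,e(X)}$; for $\F$ reflexive $\mathcal G$ is $0$-dimensional and the correction is a finite product, matching the expected reflexive-sheaf count.
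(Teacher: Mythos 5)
Your global-to-local reduction is sound in outline and runs parallel to the paper's own reduction (Proposition \ref{redtolocal}): the paper likewise cuts $X$ into pieces --- an affine open containing the singular curve, a formal neighbourhood of a complementary surface, and finitely many formal completions $\Spec\widehat{\O}_{X,P}$ --- and your ``pointwise identity'' at a singular point is precisely the paper's affine statement \eqref{affinemaineq} in the case $A=\widehat{\O}_{X,P}$, while the locally free contribution $\sfM(q)^{r}$ is Proposition \ref{locfree} in both treatments. So you have correctly reduced the theorem to a true local statement. The difficulty is that this local statement at a point of a one-dimensional $\Sing(\F)$ \emph{is} the theorem, and your proposed proof of it rests on a step that is not merely unproven but false in general.

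Concretely, you propose to degenerate the local presentation matrix to monomial form and to transport the resulting combinatorial identity back via ``invariance of $e$ under the degeneration.'' Euler characteristics of punctual Quot schemes are not invariant under flat degenerations of the module. Already in rank one, take $\F=I_C$ with $C\subset X$ a curve having a plane-curve singularity at $x$: the series $\sum_n e\big(\Quot_X(I_C,n)_x\big)q^n$ and its PT counterpart depend on the analytic type of the singularity --- by the Oblomkov--Shende--Maulik circle of results these Euler characteristics encode the HOMFLY polynomial of the link of the singularity, so a node (Hopf link) and a cusp (trefoil) give different answers, even though a family of presentation matrices degenerates one to the other, and a Gr\"obner degeneration to monomial form changes the answer yet again. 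Hence each side of your pointwise identity jumps under the degeneration you use, and asserting that the \emph{ratio} survives is exactly the statement to be proved, so the argument is circular. (Even granting monomial form, the asserted splitting of the localization combinatorics into $\sfM(q)^{r}$ times the $\ext^1$ series is not established: for non-locally-free higher rank sheaves the torus-fixed loci of punctual Quot schemes are not isolated, and this ``toric case'' is essentially the theorem again.) What is missing is the mechanism the paper actually uses for the local statement: Bourbaki's theorem (Theorem \ref{Bourbaki}) provides a cosection of the local module $M$ onto $I_C(-S)$ with free kernel, where $C$ is an auxiliary Cohen--Macaulay curve; Theorem \ref{key} identifies $\Quot_A(\Ext^1(M,A))$ with a closed locus $\Sigma$ of Pandharipande--Thomas pairs on $C$; and a Stoppa--Thomas-style Hall algebra computation (Section \ref{Hall}) then produces both the factor $\sfM(q)^{r}$ and the identity \eqref{affinemaineq}. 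Without some replacement for this wall-crossing step, the proposal does not close.
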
 

We expect that the generating function of Euler characteristics of Quot schemes of 0-dimensional quotients of $\ext^1(\F,\O_X)$ in Theorem \ref{main} is always a rational function. We prove this in the toric case in a sequel \cite{GK4}. In any case, when $\F = \ccR$ is reflexive it is a polynomial and satisfies a nice duality.
\begin{theorem} \label{cor}
Let $\ccR$ be a rank $r$ reflexive sheaf on a smooth projective threefold $X$. Taking $\F = \ccR$ in Theorem \ref{main}, the right-hand side is $\sfM(q)^{r e(X)}$ times a polynomial, which we denote by $P_{\ccR}(q)$. This polynomial satisfies
$P^*_{\ccR}(q) = P_{\ccR^*}(q)$. Moreover $P_{\ccR}(q)$ is palindromic when $r=2$.
\end{theorem}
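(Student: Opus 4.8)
My plan is to reduce the whole statement to properties of the finite-length sheaf $T := \ext^1(\ccR,\O_X)$ and to exploit two dualities: biduality of reflexive sheaves, and Grothendieck--Serre duality for sheaves of finite length. First I would record that, since $\ccR$ is reflexive of homological dimension $\leq 1$, the sheaf $T$ is zero or $0$-dimensional; write $\ell$ for its length. Any length $n$ quotient of $T$ forces $n\leq \ell$, so $\Quot_X(T,n)=\emptyset$ for $n>\ell$, and the right-hand factor of Theorem \ref{main} with $\F=\ccR$ is the polynomial $P_{\ccR}(q)=\sum_{n=0}^{\ell} e\big(\Quot_X(T,n)\big)\,q^n$ of degree $\ell$ (the top and bottom coefficients being $1$, from the quotients $T\twoheadrightarrow T$ and $T\twoheadrightarrow 0$). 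This settles the first assertion.

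Next I would set up a duality for finite-length Quot schemes. Writing $\mathbb{D}_0 = \ext^3(-,\O_X)$, the fact that $R\hom(-,\O_X)$ is a contravariant auto-equivalence of $D^b(X)$ carrying a $0$-dimensional sheaf into cohomological degree $3$ shows that $\mathbb{D}_0$ is an exact, length-preserving anti-involution on finite-length sheaves. Sending a quotient $[T\twoheadrightarrow\cQ]$ with kernel $\cK$ to $[\mathbb{D}_0 T\twoheadrightarrow\mathbb{D}_0\cK]$ then interchanges $\Quot_X(T,n)$ and $\Quot_X(\mathbb{D}_0 T,\ell-n)$, functorially in families. Hence $e\big(\Quot_X(T,n)\big)=e\big(\Quot_X(\mathbb{D}_0 T,\ell-n)\big)$, which rearranges into $P_T^*(q)=P_{\mathbb{D}_0 T}(q)$, where I abbreviate $P_S(q)=\sum_n e(\Quot_X(S,n))q^n$ for any finite-length $S$; in particular $P_{\ccR}(q)=P_T(q)$.

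The heart of the argument is then to identify $\ext^1(\ccR^*,\O_X)$ with $\mathbb{D}_0 T$. The complex $R\hom(\ccR,\O_X)$ has cohomology $\ccR^*$ in degree $0$ and $T$ in degree $1$, giving a triangle $\ccR^*\to R\hom(\ccR,\O_X)\to T[-1]\to$. Applying $R\hom(-,\O_X)$ and using biduality $R\hom(R\hom(\ccR,\O_X),\O_X)\cong\ccR$ together with $R\hom(T,\O_X)=\mathbb{D}_0 T[-3]$, the long exact cohomology sequence collapses (the degree-$0$ part recovering $\ccR\cong\ccR^{**}$) to an isomorphism $\ext^1(\ccR^*,\O_X)\cong\mathbb{D}_0 T$. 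Therefore $P_{\ccR^*}(q)=P_{\ext^1(\ccR^*,\O_X)}(q)=P_{\mathbb{D}_0 T}(q)=P_T^*(q)=P_{\ccR}^*(q)$, which is precisely $P_{\ccR}^*(q)=P_{\ccR^*}(q)$.

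For $r=2$ I would finish as follows: the pairing $\ccR\otimes\ccR\to\wedge^2\ccR$ yields $\ccR^*\cong\ccR\otimes(\det\ccR)^{-1}$ on the locally free locus, and since both sides are reflexive while the singularity set has codimension $3$, the isomorphism extends over all of $X$. Hence $\ext^1(\ccR^*,\O_X)\cong T\otimes\det\ccR$ is a line-bundle twist of $T$; as tensoring by a line bundle is an auto-equivalence identifying the corresponding Quot schemes, $P_{\ccR^*}(q)=P_{\ccR}(q)$, and combining with the previous paragraph gives $P_{\ccR}^*(q)=P_{\ccR}(q)$, i.e.\ palindromicity. The step I expect to be the main obstacle is the third one: I must fix the duality functor so that it matches the anti-involution $\mathbb{D}_0$ used in the Quot-scheme symmetry and run the Grothendieck--Serre/biduality bookkeeping (cohomological degrees and the twist by $\omega_X$) carefully enough that the surviving cohomology is exactly $\mathbb{D}_0 T$ with no stray error term. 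A secondary technical point is upgrading the bijection in the second step to a genuine scheme isomorphism in families via relative duality, although for the Euler-characteristic statement a constructible bijection already suffices.
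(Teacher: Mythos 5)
Your proposal is correct and follows essentially the same route as the paper: polynomiality from the finite length of $\ext^1(\ccR,\O_X)$, the Quot-scheme duality $\Quot_X(T,n)\cong\Quot_X(T^D,\ell-n)$ induced by $\ext^3(-,\O_X)$, the key isomorphism $\ext^1(\ccR^*,\O_X)\cong\ext^1(\ccR,\O_X)^D$, and Hartshorne's $\ccR^*\cong\ccR\otimes\det(\ccR)^{-1}$ for the rank $2$ palindromicity. The only (cosmetic) difference is that you establish the key isomorphism via the truncation triangle for $R\hom(\ccR,\O_X)$ and derived biduality, whereas the paper dualizes a two-term locally free resolution and splits the result into short exact sequences; the underlying computation is the same.
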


For a polarization $H$ on $X$, denote by $M_{X}^{H}(r,c_1,c_2,c_3)$ the moduli space of $\mu_H$-stable rank $r$ torsion free sheaves on $X$ with Chern classes $c_1,c_2,c_3$. We recall that the slope (with respect to $H$) of a rank $r$ torsion free sheaf $\F$ on $X$ is defined by $\mu_H(\F) = (c_1(\F) \cdot H^2) / r$. See \cite[Sect.~1.2]{HL} for details. For an element $[\F]$ of this moduli space, there is a natural inclusion of $\F$ into its double dual $\F^{**}$, called its reflexive hull, and the quotient $\F^{**} / \F$ has dimension $\leq 1$. Let
\begin{equation} \label{opensubset}
M_{X}^{H}(r,c_1,c_2,c_3)^\circ \subset M_{X}^{H}(r,c_1,c_2,c_3)
\end{equation}
be the (possibly empty) open subset of isomorphism classes $[\F]$ for which $\F^{**} / \F$ is zero or 0-dimensional. We prove openness in Lemma \ref{Kollar}. 

The open subset \eqref{opensubset} has an alternative description as follows. The singularity set of a coherent sheaf $\F$ is defined as the locus where $\F$ is not locally free \cite{OSS}. When $\F$ is torsion free on $X$, the singularity set has dimension $\leq 1$. In Lemma \ref{Kollar} we show that $M_{X}^{H}(r,c_1,c_2,c_3)^\circ$ is also the locus of sheaves with empty or 0-dimensional singularity set.

\begin{theorem} \label{thm2}
For any smooth projective threefold $X$ with polarization $H$
$$
P_{r,c_1,c_2}(q) := \frac{\sum_{c_3} e\big(M_{X}^{H}(r,c_1,c_2,c_3)^\circ \big) \, q^{c_3}}{\sfM(q^{-2})^{r e(X)}} 
$$
is a Laurent polynomial in $q$ satisfying 
$$
P_{r,c_1,c_2}(q^{-1}) = P_{r,-c_1,c_2}(q).
$$ 
Moreover for $r=2$
$$
P_{2,c_1,c_2}(q^{-1}) = P_{2,c_1,c_2}(q).
$$ 
\end{theorem}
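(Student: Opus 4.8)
The plan is to fibre $M_X^H(r,c_1,c_2,c_3)^\circ$ over a moduli space of $\mu$-stable reflexive sheaves by the reflexive hull map $[\F]\mapsto[\F^{**}]$, to identify each fibre with a Quot scheme, and then to feed in Theorems \ref{main} and \ref{cor}. First I would record the local picture at a point $[\F]$: writing $\ccR:=\F^{**}$, the reflexive hull fits in an exact sequence $0\to\F\to\ccR\to Q\to 0$ with $Q$ of finite length $n$, so $\ccR$ is $\mu$-stable reflexive with the same $r,c_1,c_2$ (stability of $\F$ and of $\ccR$ coincide, as the two sheaves agree in codimension one), and a Chern character computation gives $c_3(\F)=c_3(\ccR)-2n$. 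Since $\ccR$ is stable, $\Aut(\ccR)=\C^*$ acts trivially on its subsheaves, so for fixed $c_3$ the locus of $[\F]$ with $\F^{**}\cong\ccR$ is exactly the Quot scheme $\Quot_X(\ccR,n)$ with $n=(c_3(\ccR)-c_3)/2$; this is the source of the exponent $q^{-2}$.

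Next I would stratify the moduli space of $\mu$-stable reflexive sheaves with these $(r,c_1,c_2)$ into finitely many constructible pieces $S_\alpha$ on which both $c_3(\ccR)=c_3^\alpha$ and the series $\sum_n e(\Quot_X(\ccR,n))\,q^n$ are constant, the latter because it depends only on $\ext^1(\ccR,\O_X)$ by Theorem \ref{main}. Using that topological Euler characteristic is motivic, hence multiplicative along constructible families with constant fibrewise Euler characteristic, and writing $P_\alpha=P_\ccR$ for $\ccR\in S_\alpha$ as in Theorem \ref{cor}, the contribution of $S_\alpha$ is
\[
e(S_\alpha)\,q^{c_3^\alpha}\sum_n e(\Quot_X(\ccR,n))\,q^{-2n}=e(S_\alpha)\,q^{c_3^\alpha}\,\sfM(q^{-2})^{r e(X)}P_\alpha(q^{-2}).
\]
Summing over $\alpha$ and dividing by $\sfM(q^{-2})^{r e(X)}$ gives
\[
P_{r,c_1,c_2}(q)=\sum_\alpha e(S_\alpha)\,q^{c_3^\alpha}P_\alpha(q^{-2}),
\]
a finite sum of Laurent polynomials, hence a Laurent polynomial.

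For the functional equation I would use the dual $\ccR\mapsto\ccR^*:=\hom(\ccR,\O_X)$, an involution of reflexive sheaves that sends the stratum $S_\alpha$ isomorphically onto a stratum of the $(r,-c_1,c_2)$ reflexive moduli with the same Euler characteristic, with $c_3$ changed to $-c_3^\alpha+2\ell_\alpha$ where $\ell_\alpha=\deg P_\alpha=\mathrm{length}\,\ext^1(\ccR,\O_X)$, and with $P_\alpha$ replaced by $P_{\ccR^*}=P_\alpha^*$ by Theorem \ref{cor}. Substituting these into the formula for $P_{r,-c_1,c_2}(q)$ and using $P_\alpha^*(q^{-2})=q^{-2\ell_\alpha}P_\alpha(q^2)$ matches it term by term with $P_{r,c_1,c_2}(q^{-1})$, yielding $P_{r,c_1,c_2}(q^{-1})=P_{r,-c_1,c_2}(q)$. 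For $r=2$ I would additionally twist by a line bundle $L_0$ with $c_1(L_0)=c_1$, which exists whenever the moduli space is nonempty since then $c_1$ lies in the image of $\Pic X\to H^2(X,\Z)$; a Chern character computation shows that $[\F]\mapsto[\F\otimes L_0]$ carries $(2,-c_1,c_2,c_3)^\circ$ to $(2,c_1,c_2,c_3)^\circ$ with $c_3$ unchanged, so $P_{2,-c_1,c_2}=P_{2,c_1,c_2}$, and combining with the general equation gives the palindromy $P_{2,c_1,c_2}(q^{-1})=P_{2,c_1,c_2}(q)$.

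The step I expect to be the main obstacle is the second one: upgrading the set-theoretic fibration $[\F]\mapsto[\F^{**}]$ to a genuine Euler characteristic identity. The double dual is not flat in families, so one must build a constructible stratification of the reflexive moduli over which the relative Quot scheme has locally constant fibrewise Euler characteristic and invoke the motivic behaviour of $e(\cdot)$. Subordinate to this, and equally essential for the conclusion being a \emph{Laurent polynomial} rather than a formal series, is the boundedness input that $\mu$-stable reflexive sheaves with fixed $(r,c_1,c_2)$ realise only finitely many values of $c_3(\ccR)$.
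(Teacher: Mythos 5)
Your proposal follows essentially the same route as the paper for the main statements: fibre $M_{X}^{H}(r,c_1,c_2,c_3)^\circ$ over the moduli space of $\mu$-stable reflexive sheaves via $[\F]\mapsto[\F^{**}]$, identify the fibres with Quot schemes (your observation that $\Aut(\ccR)=\C^*$ acts trivially on subsheaves, so distinct subsheaves give distinct points of moduli, is in fact a more careful justification than the paper gives), feed in Theorems \ref{main} and \ref{cor}, and implement $c_1\leftrightarrow -c_1$ by the dual involution $\ccR\mapsto\ccR^*$. Your constructible stratification with multiplicativity of $e(\cdot)$ is the paper's integration of constructible functions against the Euler characteristic measure in different words, and your term-by-term matching is exactly the paper's identity $q^{c_3(\ccR)}P_{\ccR}(q^{-2})=q^{-c_3(\ccR^*)}P_{\ccR^*}(q^{2})$, which rests on the same two inputs (Lemma \ref{c3} and $P_{\ccR}^*=P_{\ccR^*}$).

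Two genuine differences are worth recording. First, your rank $2$ argument departs from the paper's: the paper uses Hartshorne's identity $c_3(\ccR)=\ell(\ext^1(\ccR,\O_X))$ for rank $2$ reflexive sheaves together with the palindromy of $P_{\ccR}$ from Theorem \ref{cor}, applied sheaf by sheaf; you instead twist by a line bundle $L_0$ with $c_1(L_0)=c_1$ (which exists, as $L_0=\det\F$ for any $\F$ in a nonempty moduli space) to get isomorphisms $M_{X}^{H}(2,-c_1,c_2,c_3)^\circ\cong M_{X}^{H}(2,c_1,c_2,c_3)^\circ$ — your claim that $c_2$ and $c_3$ are unchanged under this rank $2$ twist checks out — and then combine with the general functional equation. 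This is correct and arguably more elementary, since it bypasses the rank $2$ part of Theorem \ref{cor} entirely; ultimately both arguments trace back to $\ccR^*\cong\ccR\otimes\det(\ccR)^{-1}$. Second, the one substantive input you flag but do not supply is the finiteness of the set of realized $c_3(\ccR)$, without which $P_{r,c_1,c_2}$ is only a formal series and your stratification need not be finite. Note that the boundedness result the paper cites (\cite{GK1}) gives only an \emph{upper} bound $c_3(\ccR)\leq C$; the lower bound is obtained by a duality trick, applying the upper bound also to the $(r,-c_1,c_2)$ moduli space and using $c_3(\ccR)=2\ell(\ext^1(\ccR,\O_X))-c_3(\ccR^*)\geq -C'$. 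This step is short but not automatic, and your proof is incomplete without it.
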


For fixed $c_1 \in H^2(X,\Z)$, let $c_2 \in H^4(X,\Z)$ be chosen such that $c_2 \cdot H$ is the \emph{minimal} value for which there exist rank $r$ $\mu_H$-stable torsion free sheaves on $X$ with Chern classes $c_1,c_2$ or $-c_1,c_2$. Such a minimal value exists by the Bogomolov inequality \cite[Thm.~7.3.1]{HL}. For such a choice of $c_2$ we have 
$$
M_{X}^{H}(r,\pm c_1,c_2,c_3)^\circ = M_{X}^{H}(r,\pm c_1,c_2,c_3)
$$ 
for all $c_3$ (e.g.~by \cite[Prop.~3.1]{GKY}). In this case Theorem \ref{thm2} gives a functional equation for the \emph{complete} generating function. Explicit examples of the Laurent polynomials appearing in Theorem \ref{thm2} for $X$ a smooth projective toric threefold can be found in \cite[Ex.~3.7--3.9]{GKY}. E.g.~for $X = \PP^3$, $r=2$, $c_1=H$, $c_2=H^2$, where $H$ denotes the hyperplane class, we have $$P_{2,c_1,c_2}(q) = 4q^{-1}+4q.$$ 

\subsection*{The rank 1 case}

Suppose $C \subset X$ is a Cohen-Macaulay curve on a smooth projective threefold. Then its ideal sheaf $I_C$ has homological dimension 1. By dualizing the ideal sheaf sequence, we obtain an isomorphism
$$\ext^1(I_C,\O_X) \cong \ext^2(\O_C,\O_X).$$
In turn, dualizing any short exact sequence
$$
0 \rightarrow F' \rightarrow \ext^2(\O_C,\O_X) \rightarrow Q' \rightarrow 0,
$$
where $Q'$ is 0-dimensional, produces a Pandharipande-Thomas pair \cite{PT1}
$$
0 \rightarrow \O_C \rightarrow F \rightarrow Q \rightarrow 0,
$$
where $F:=\ext^2(F',\O_X)$, $Q := \ext^3(Q',\O_X)$, and $$\ext^2(\ext^2(\O_C,\O_X),\O_X) \cong \O_C.$$ This uses \cite[Prop.~1.1.6, 1.1.10]{HL}. These arguments show that the Quot schemes on the RHS in Theorem \ref{main} are in bijective correspondence\footnote{Here we only described a set theoretic bijection. This can be made into a morphism of schemes which is bijective on $\C$-valued points much like in Theorem \ref{key}.} to moduli spaces $\PT_{X}(C,n)$ of Pandharipande-Thomas pairs $(F,s)$ on $X$ with support curve $C$ and $\chi(F) = \chi(\O_C)+n$. Moreover, the Quot schemes on the LHS in Theorem \ref{main} are in bijective correspondence with moduli spaces $I_{X}(C,n)$ of ideal sheaves $I_Z \subset I_C$ such that $I_C/I_Z$ is 0-dimensional of length $n$. So in this case, Theorem \ref{main} reduces to the DT/PT correspondence on a fixed Cohen-Macaulay curve $C$ proved by Stoppa-Thomas \cite{ST}
\begin{equation*} 
\sum_{n=0}^{\infty} e\big(I_X(C,n)\big) \, q^n = \sfM(q)^{e(X)} \sum_{n=0}^{\infty} e\big(\PT_X(C,n)\big) \, q^n.
\end{equation*}
When $X$ is Calabi-Yau, a weighted Euler characteristic version of this formula was proved by A.~Ricolfi (for smooth $C$ \cite{Ric1, Ric2}) and G.~Oberdieck (for any CM curve $C$ \cite{Obe}).
Theorem \ref{main} can be seen as a generalization of Stoppa-Thomas's result to \emph{arbitrary torsion free sheaves of homological dimension 1}. 

Suppose $X$ is Calabi-Yau, $r=1$, and $c_1=0$. Then $M_{X}^{H}(1,0,c_2,c_3)$ are Hilbert schemes of subschemes of dimension $\leq 1$ of $X$ and 
$$
\frac{\sum_{c_3} e\big(M_{X}^{H}(1,0,c_2,c_3) \big) \, q^{c_3}}{\sfM(q^{-2})^{e(X)}} 
$$
is a rational function invariant under $q \leftrightarrow q^{-1}$. 
This was proved by Y.~Toda \cite{Tod1}. A Behrend function version of this statement was proved by T.~Bridgeland \cite{Bri2}. This established the famous rationality and functional equation of Pandharipande-Thomas theory \cite{PT1}. For general $X,r,c_1,c_2$ we expect
$$
\frac{\sum_{c_3} e\big(M_{X}^{H}(r,c_1,c_2,c_3) \big) \, q^{c_3}}{\sfM(q^{-2})^{re(X)}}
$$
is a rational function and again we prove it in the toric case in \cite{GK4}. Examples of this rational function were calculated in \cite{GKY} and they are certainly \emph{not} invariant under $q \rightarrow q^{-1}$ in general \cite[Ex.~3.8]{GKY}. 

\subsection*{The proof} Theorems \ref{cor}, \ref{thm2} are proved in Section \ref{section1}. A special case of Theorem \ref{main} was proved in \cite{GK2}, i.e.~for $\F = \ccR$ a rank 2 reflexive sheaf satisfying the following two additional conditions\footnote{J.~Rennemo pointed out that these two additional conditions can be dropped \cite{GK2}. After twisting by $-mH$, with $H$ very ample and $m \gg 0$, the desired cosection always exists. This argument only works for \emph{rank 2 reflexive sheaves}. Instead, in this paper we use that such a cosection always exists locally for \emph{torsion free sheaves of any rank} as discussed below.} 
\begin{itemize}
\item $H^1(\det \ccR) = H^2(\det \ccR) = 0$, 
\item there exists a cosection $\ccR \rightarrow \O_X$ such that the image is the ideal sheaf of a 1-dimensional scheme.
\end{itemize}
The proof of \cite{GK2} uses the Serre correspondence and a rank 2 version of a Hall algebra calculation by Stoppa-Thomas \cite{ST}. In this paper we first reduce to an \emph{affine version} of Theorem \ref{main} on $\Spec A$ (Section \ref{section1}) with
\begin{itemize}
\item $\Spec A$ a Zariski open subset of $X$, or
\item $\Spec A = \Spec \widehat{\O}_{X,P}$, where $\widehat{\O}_{X,P}$ is the completion of the stalk $\O_{X,P}$ at a closed point $P \in X$.
\end{itemize}
Let $M$ be the $A$-module corresponding to $\F |_{\Spec A}$. By a theorem of Bourbaki (see Theorem \ref{Bourbaki}), there exists an $A$-module homomorphism $M \rightarrow A$ with free kernel. This cosection $M \rightarrow A$ allows us to prove an affine version of Theorem \ref{main} in the rank 2 case following ideas of \cite{GK2}. The higher rank case builds on the rank 2 case by a new inductive construction. The proof of the affine version of Theorem \ref{main} occupies Sections \ref{section2} and \ref{Hall}. 

\begin{remark}
{\rm
It is an interesting question to what extent the methods of this proof work without reduction to the affine case $\Spec A$. Let $X$ be a smooth projective threefold and $\F$ a torsion free sheaf of homological dimension $1$ on $X$. Suppose there exists a cosection $\F \rightarrow \O_X$ with locally free kernel. Then it appears that most methods of the proof of this paper (in particular Theorem \ref{key}) work globally on $X$ as well. As mentioned above, when $\F = \ccR$ is reflexive of rank 2, such a cosection exists (possibly after twisting by a line bundle). In general, the authors only have the required cosection in the affine case (Theorem \ref{Bourbaki}), which leads to the current approach.}
\end{remark}

We expect that Theorem \ref{main} is related to Toda's recent work on the higher rank DT/PT correspondence on Calabi-Yau threefolds \cite{Tod2}, which involves J.~Lo's notion of higher rank Pandharipande-Thomas pairs \cite{Lo1}. The latter were also related to Quot schemes of $\ext^1(\ccR,\O_X)$, where $\ccR$ is a $\mu_H$-stable reflexive sheaf, in \cite[Rem.~4.4]{Lo1}. Recently, after this paper appeared, the relationships between these works have been clarified and further investigated by Lo \cite{Lo2} and S.~Beentjes and Ricolfi \cite{BR}.

Theorem \ref{main} has the nice property that it holds for any torsion free sheaf of homological dimension $\leq 1$ (not necessarily stable or reflexive) on any smooth projective threefold (not necessarily Calabi-Yau).

\bigskip

\noindent \textbf{Acknowledgements.} We thank J.~Lo, D.~Maulik, R.~Skjelnes, and R.~P.~Thomas for useful discussions. A.G.~was partially supported by NSF grant DMS-1406788. M.K.~was supported by Marie Sk{\l}odowska-Curie Project 656898.

\bigskip

\noindent \textbf{Notation.} All sheaves in this paper are coherent and all modules are finitely generated. The length of a 0-dimensional sheaf $\cQ$ is denoted by $\ell(\cQ)$.

For any sheaf $\cE$ with support of codimension $c$ on a smooth variety $X$  and any line bundle $L$ on $X$, we define
\begin{equation*} 
(\cE)^{D}_{L} := \ext^c(\cE,L).
\end{equation*}
We also write $\cE^D := (\cE)^{D}_{\O_X}$.\footnote{This differs slightly from the notation of \cite{HL}, where $\cE^D$ denotes $\ext^c(\cE,\omega_X)$.} In this paper we often use \cite[Prop.~1.1.6, 1.1.10]{HL}. Although these results are stated for smooth \emph{projective} varieties, they also hold for smooth varieties and even for regular Noetherian $\C$-schemes.\footnote{Indeed the only place where projectivity is used, is in the proof of the vanishing statement of Prop.~1.1.6(i), namely $\ext^{i}(\F,\O_X) = 0$ for all $i < c$ where $c$ is the codimension of the support of $\F$. However this vanishing also holds at the level of local rings using ``local duality'' \cite[Thm.~4.4]{Hun}, thereby avoiding projectivity.}

The topological Euler characteristic of a $\C$-stack is by definition the naive Euler characteristic of the associated set of isomorphism classes of $\C$-valued points as defined in \cite[Def.~4.8]{Joy1}.

\section{Consequences and reduction to affine case} \label{section1}

In the first subsection, we prove that Theorem \ref{main} implies Theorems \ref{cor} and \ref{thm2}. In the second subsection, we reduce Theorem \ref{main} to the affine case.

\subsection{Consequences} 

\begin{proof}[Proof of Theorem \ref{cor}]
Any reflexive sheaf $\ccR$ on a smooth threefold $X$ has homological dimension $\leq 1$ (\cite[Prop.~1.3]{Har2} and the Auslander-Buchsbaum formula). Therefore we have a resolution
$$
0 \rightarrow E_1 \rightarrow E_0 \rightarrow \ccR \rightarrow 0,
$$
where $E_0, E_1$ are locally free. Dualizing this short exact sequence and breaking up the resulting long exact sequence gives
\begin{align}
0 &\rightarrow \ccR^* \rightarrow E_0^* \rightarrow \cC \rightarrow 0, \label{firstses} \\
0 &\rightarrow \cC \rightarrow E_1^* \rightarrow \ext^1(\ccR,\O_X) \rightarrow 0, \label{secondses}
\end{align}
where $\cC$ is the image of $E_0^* \rightarrow E_1^*$. Dualizing \eqref{firstses} gives
$$
\ext^2(\cC,\O_X) \cong \ext^1(\ccR^*,\O_X).
$$
Dualizing \eqref{secondses} gives
$$
\ext^2(\cC,\O_X) \cong \ext^1(\ccR,\O_X)^D,
$$
where we recall that $\ext^1(\ccR,\O_X)$ is zero or 0-dimensional because $\ccR$ is reflexive \cite[Prop.~1.1.10]{HL}. Therefore we conclude
\begin{equation} \label{extD}
\ext^1(\ccR,\O_X)^D \cong \ext^1(\ccR^*,\O_X).
\end{equation}

Assume $\ext^1(\ccR,\O_X)$ is non-zero, because otherwise $\ccR$ is locally free and there is nothing to prove. Recall the definition of $P_{\ccR}(q)$ from the statement of Theorem \ref{cor} and let $d := \deg P_{\ccR}(q)$, which equals $\ell(\ext^1(\ccR,\O_X))$. We claim that there is an isomorphism
\begin{equation} \label{dualmap}
(\cdot)^D : \Quot_X(\ext^1(\ccR,\O_X),n) \rightarrow \Quot_X(\ext^1(\ccR,\O_X)^D,d-n).
\end{equation}
Indeed any short exact sequence
$$
0 \rightarrow \cK \rightarrow \ext^1(\ccR,\O_X) \rightarrow \cQ \rightarrow 0
$$
with $\ell(\cQ) = n$ dualizes to a short exact sequence
$$
0 \rightarrow \cQ^D \rightarrow \ext^1(\ccR,\O_X)^D \rightarrow \cK^D \rightarrow 0.
$$
Note that $\ell(\cQ^D) = \ell(\cQ)$ and $\cQ^{DD} \cong \cQ$ for any 0-dimensional sheaf $\cQ$ on $X$.\footnote{This argument can easily be extended to the level of flat families of 0-dimensional sheaves using derived duals and cohomology and base change for Ext groups \cite[Prop.~3.1]{Sch}.} The claim follows. The first statement of the theorem follows from the following computation
\begin{align*}
P_{\ccR}^*(q) &= q^d \sum_{n=0}^{\infty} e\big(\Quot_X(\ext^1(\ccR,\O_X),n)\big) \, (q^{-1})^n \\
&= \sum_{n=0}^{\infty} e\big(\Quot_X(\ext^1(\ccR,\O_X)^D,n)\big) \, q^n \\
&= \sum_{n=0}^{\infty} e\big(\Quot_X( \ext^1(\ccR^*,\O_X),n)\big) \, q^n = P_{\ccR^*}(q),
\end{align*}
where we first used \eqref{dualmap} and then \eqref{extD}. 

Finally we note that for any line bundle $L$ we have 
\begin{equation*}
P_{\ccR \otimes L}(q) = P_{\ccR}(q).
\end{equation*}
The second statement of the theorem follows from the fact that for any rank 2 reflexive sheaf $\ccR$ we have $\ccR^* \cong \ccR \otimes \det(\ccR)^{-1}$ \cite[Prop.~1.10]{Har2}.
\end{proof}

Before we prove that Theorem \ref{main} implies Theorem \ref{thm2}, we need two lemmas. 
\begin{lemma} \label{c3}
Let $\ccR$ be a reflexive sheaf on a smooth projective threefold $X$ with polarization $H$. Then 
\begin{enumerate}
\item[\rm (1)] $c_1(\ccR^*) = -c_1(\ccR)$, $c_2(\ccR^*) = c_2(\ccR)$, $c_3(\ccR)+c_3(\ccR^*) = 2 \ell(\ext^1(\ccR,\O_X))$. Moreover if $\ccR$ has rank 2, then $c_3(\ccR) = c_3(\ccR^{*}) = \ell(\ext^1(\ccR,\O_X))$.
\item[\rm (2)] If $\ccR$ is $\mu_H$-stable and reflexive, then so is $\ccR^*$.
\end{enumerate}
\end{lemma}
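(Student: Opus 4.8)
The plan is to prove part (1) by a Chern character computation built from dualizing a locally free resolution, deduce the rank $2$ refinement by a line bundle twist, and then establish part (2) by the standard dualization argument applied to saturated subsheaves.

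\textbf{Part (1).} Since $\ccR$ is reflexive on the smooth threefold $X$, it has homological dimension $\leq 1$, so I start from a resolution $0 \to E_1 \to E_0 \to \ccR \to 0$ with $E_0,E_1$ locally free. Dualizing and using $\ext^{>1}(\ccR,\O_X)=0$ together with $\ext^1(\ccR,\O_X)$ being $0$-dimensional (because $\ccR$ is reflexive) yields the four term exact sequence
$$
0 \to \ccR^* \to E_0^* \to E_1^* \to \ext^1(\ccR,\O_X) \to 0.
$$
Passing to the Grothendieck group, $[E_0^*]-[E_1^*]$ is the dual of $[\ccR]$, so $\ch_i(\ccR^*) = (-1)^i \ch_i(\ccR) + \ch_i(\ext^1(\ccR,\O_X))$. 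As $\ext^1(\ccR,\O_X)$ is $0$-dimensional of length $\ell := \ell(\ext^1(\ccR,\O_X))$, its Chern character is $\ell\,[\pt]$, concentrated in degree $3$. Reading off degrees $1$ and $2$ gives $c_1(\ccR^*) = -c_1(\ccR)$ and, after inserting $\ch_2 = \tfrac12(c_1^2 - 2c_2)$ and using $c_1(\ccR^*)^2 = c_1(\ccR)^2$, also $c_2(\ccR^*) = c_2(\ccR)$. In degree $3$ I have $\ch_3(\ccR^*) = -\ch_3(\ccR) + \ell$; converting via $\ch_3 = \tfrac16(c_1^3 - 3c_1 c_2 + 3c_3)$ and feeding in the identities just proved for $c_1,c_2$, the $c_1^3$ and $c_1c_2$ contributions cancel and I am left with $c_3(\ccR)+c_3(\ccR^*) = 2\ell$.

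\textbf{Rank 2 refinement.} For $r=2$ I will use $\ccR^* \cong \ccR \otimes (\det \ccR)^{-1}$. The key point is that $c_3$ is unchanged under tensoring a rank $2$ sheaf by a line bundle $L$: writing $t = c_1(L)$ and expanding $\ch(\ccR \otimes L) = \ch(\ccR)\,e^{t}$, every $t$-dependent contribution to $c_3$ cancels, leaving $c_3(\ccR \otimes L) = c_3(\ccR)$. (This is the tensor formula $c_k(\ccR \otimes L) = \sum_i \binom{2-i}{k-i} c_i(\ccR)\, t^{k-i}$ at $k=3$, whose only surviving term is $i=3$.) Hence $c_3(\ccR^*) = c_3(\ccR)$, which combined with $c_3(\ccR)+c_3(\ccR^*) = 2\ell$ gives $c_3(\ccR) = c_3(\ccR^*) = \ell$.

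\textbf{Part (2).} The dual of a coherent sheaf on a smooth variety is reflexive, so $\ccR^*$ is reflexive and $\ccR^{**} \cong \ccR$. For $\mu$-stability it suffices to test saturated subsheaves $\cK \subset \ccR^*$ with $0 < \rk\cK < \rk\ccR$, for which $\cQ := \ccR^*/\cK$ is torsion free. Dualizing $0 \to \cK \to \ccR^* \to \cQ \to 0$ produces an inclusion $\cQ^* \hookrightarrow \ccR^{**} \cong \ccR$ with $\rk \cQ^* = \rk\ccR - \rk\cK \in (0,\rk\ccR)$ and $c_1(\cQ^*) = c_1(\ccR)+c_1(\cK)$. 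Applying $\mu$-stability of $\ccR$ to $\cQ^* \subset \ccR$ gives $\mu(\cQ^*) < \mu(\ccR)$; writing this out with $R=\rk\ccR$, $s=\rk\cK$, $d = c_1(\ccR)\cdot H^2$, $e=c_1(\cK)\cdot H^2$ turns $\tfrac{d+e}{R-s} < \tfrac{d}{R}$ into $\tfrac{e}{s} < -\tfrac{d}{R}$, i.e.\ $\mu(\cK) < -\mu(\ccR) = \mu(\ccR^*)$. This shows $\ccR^*$ is $\mu$-stable.

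\textbf{Main obstacle.} The steps in part (1) are routine once the dual resolution is available; the one point needing care is the rank $2$ identity $c_3(\ccR \otimes L) = c_3(\ccR)$, where I must apply the twist formula to the K-theory class of a reflexive (possibly non-locally-free) sheaf rather than invoke Chern roots, which would be invalid since $c_3(\ccR)$ may be nonzero. The genuinely conceptual step is part (2): the crux is to dualize the sequence for $\cK \subset \ccR^*$ so that the resulting subsheaf of $\ccR$ has complementary rank lying strictly between $0$ and $\rk\ccR$, and then to track signs carefully — dualizing reverses slopes, $\mu \mapsto -\mu$ — so that stability of $\ccR$ indeed transports to stability of $\ccR^*$.
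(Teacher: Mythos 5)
Your proof is correct, and parts of it track the paper closely: for the main formulas in (1) you use the same dual four-term sequence $0 \to \ccR^* \to E_0^* \to E_1^* \to \ext^1(\ccR,\O_X) \to 0$ and a Chern character comparison, and for (2) you use the same dualization-reverses-slopes mechanism (the paper runs it contrapositively, assuming a destabilizing $\F \subset \ccR^*$ and contradicting stability of $\ccR$; you run it directly on saturated subsheaves — same argument). Where you genuinely diverge is the rank $2$ refinement: the paper simply cites Hartshorne's result that $c_3(\ccR) = \ell(\ext^1(\ccR,\O_X))$ for rank $2$ reflexive sheaves (with a footnote extending it from $\PP^3$ to arbitrary smooth projective threefolds), whereas you rederive it from $\ccR^* \cong \ccR \otimes (\det\ccR)^{-1}$ together with the K-theoretic twist formula showing $c_3$ of a rank $2$ class is invariant under tensoring by a line bundle, then feed this into the already-proved identity $c_3(\ccR)+c_3(\ccR^*) = 2\ell$. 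Your route is more self-contained (it is essentially Hartshorne's own proof reconstructed), and you are right to flag that the twist formula must be applied to the K-theory class rather than via Chern roots; the paper's route is shorter but leans on an external reference. One small point: in (2) you assert $c_1(\cQ^*) = -c_1(\cQ)$ for the torsion free quotient $\cQ$ without justification. This is exactly the identity the paper takes care to prove (its equation for $c_1(\F^*)$, via a locally free resolution and the fact that $\ext^{>0}(\F,\O_X)$ has codimension $\geq 2$); it is standard, but a complete write-up should include it, since slope reversal under dualization is the crux of the stability transfer.
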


\begin{proof}
Part (2) is a standard result. For (1) we take a locally free resolution resolution
$$
0 \rightarrow E_1 \rightarrow E_0 \rightarrow \ccR \rightarrow 0,
$$
which exists as we saw in the previous proof. Dualizing gives 
$$
0 \rightarrow \ccR^* \rightarrow E_0^* \rightarrow E_1^* \rightarrow \ext^1(\ccR,\O_X) \rightarrow 0,
$$
where $\ext^1(\ccR,\O_X)$ is zero or 0-dimensional and we have $\ext^{>1}(\ccR,\O_X)=0$ by \cite[Prop.~1.1.10]{HL}. Taking $\ch(\cdot)$ of both exact sequences and using $c_i(E_j^*) = (-1)^i c_i(E_j)$ for $j=0,1$ gives the desired formulae. If $\ccR$ has rank 2, then $c_3(\ccR)$ equals the length of $\ext^1(\ccR,\O_X)$ by \cite[Prop.~2.6]{Har2}.\footnote{R.~Hartshorne's result is stated for $X = \PP^3$, but holds on any smooth projective threefold, e.g.~see \cite[Prop.~3.6]{GK1}.}
\end{proof}

\begin{lemma} \label{Kollar}
Let $X$ be a smooth projective threefold with polarization $H$. In $M_{X}^{H}(r,c_1,c_2,c_3)$ the locus $M_{X}^{H}(r,c_1,c_2,c_3)^\circ$ consisting of isomorphism classes $[\F]$ for which $\F^{**} / \F$ is zero or 0-dimensional is a (possibly empty) Zariski open subset. Moreover $M_{X}^{H}(r,c_1,c_2,c_3)^\circ$ is the locus of isomorphism classes $[\F]$ for which the singularity set of $\F$ is empty or 0-dimensional.
\end{lemma}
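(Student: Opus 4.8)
The plan is to establish two things: first that $M_{X}^{H}(r,c_1,c_2,c_3)^\circ$ is Zariski open, and second that the condition ``$\F^{**}/\F$ is zero or $0$-dimensional'' coincides with the condition ``the singularity set of $\F$ is empty or $0$-dimensional.'' I would begin with the second, since it is really a pointwise/fiberwise statement and clarifies what geometric locus we are dealing with. The key observation is that for a torsion free sheaf $\F$ on a smooth threefold, the natural map $\F \to \F^{**}$ is an isomorphism away from the singularity set, and more precisely $\F$ is locally free at a point $P$ if and only if $\F_P \cong \F^{**}_P$ as $\O_{X,P}$-modules. Indeed, $\F^{**}$ is reflexive, hence locally free in codimension $\leq 2$ on a smooth threefold \cite[Cor.~1.4]{Har}, so its singularity set is at most $0$-dimensional. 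Thus the support of $\F^{**}/\F$ contains the locus where $\F$ fails to be locally free but $\F^{**}$ is, while at the (finitely many) points where $\F^{**}$ itself is not locally free the two sheaves may both be singular. The cleanest route is to show directly that the singularity set of $\F$ equals $\Sing(\F^{**}) \cup \Supp(\F^{**}/\F)$: away from $\Supp(\F^{**}/\F)$ we have $\F \cong \F^{**}$, so $\F$ is locally free exactly where $\F^{**}$ is, and on $\Supp(\F^{**}/\F)$ the sheaf $\F$ is certainly not locally free since the injection $\F_P \hookrightarrow \F^{**}_P$ is not surjective there. Since $\Sing(\F^{**})$ is always $0$-dimensional, one concludes that $\Sing(\F)$ is empty or $0$-dimensional if and only if $\Supp(\F^{**}/\F)$ is, which is exactly the defining condition of the superscript-$\circ$ locus.

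For openness, the issue is that $\F^{**}/\F$ varies in a family and we must control the jump in the dimension of its support. Here I would invoke a semicontinuity/constructibility argument for the reflexive hull in families, which is where the name ``Kollár'' in the label presumably points: the formation of the double dual does not commute with base change in general, so one cannot naively apply cohomology and base change. The plan is to work with a universal family $\mathbb F$ on $M \times X$ (or étale-locally on $M$, a flat family representing a neighborhood of $[\F]$) and consider the relative double dual, or equivalently to stratify $M$ by the dimension of the support of $\F_t^{**}/\F_t$. The statement to prove is that $\{t : \dim \Supp(\F_t^{**}/\F_t) \leq 0\}$ is open. Equivalently, using the identification above, one wants the locus where the singularity set of $\F_t$ is at most $0$-dimensional to be open, and this follows from upper semicontinuity of the dimension of the non-locally-free locus in a flat family of torsion free sheaves — a sheaf that is locally free on a curve in a special fiber can only become ``more locally free'' on nearby fibers, never less. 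Concretely, I would express the condition via the support dimension of the Ext-sheaf $\ext^1(\F,\O_X)$ (for homological dimension $\leq 1$ this measures exactly the failure of reflexivity) and invoke semicontinuity of fiber dimension for the relative support, e.g.~via \cite[Prop.~1.1.10]{HL} applied in families together with the generic flatness/constructibility package.

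The main obstacle I anticipate is precisely the base-change behavior of $\F^{**}$: reflexive hulls are not well-behaved in flat families, so one must either replace $\F^{**}/\F$ by something that does commute with base change (such as an appropriate relative $\ext$-sheaf, whose formation is controlled by cohomology and base change for $\Ext$ \cite[Prop.~3.1]{Sch}) or argue the openness directly on the singularity-set side where the semicontinuity of support dimension is more transparent. I would favor the latter: prove the set-theoretic equality of loci first, then deduce openness from the openness of ``$\Sing(\F_t)$ has dimension $\leq 0$,'' which can be phrased as the complement of the image in $M$ of the locus in $M \times X$ where a certain Fitting ideal (cutting out the non-locally-free locus) has $1$-dimensional fibers. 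Constructibility of this locus together with properness of the projection over the relevant strata then gives the desired Zariski-openness.
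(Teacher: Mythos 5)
Your proposal is correct in substance, but for the openness statement it takes a genuinely different route from the paper. The paper never analyzes the singularity locus in families: it rewrites $M_{X}^{H}(r,c_1,c_2,c_3)^\circ$ as the sublevel set $\{[\F] \,:\, P_{\F^{**}}(t) \leq P(t)+N\}$ of the Hilbert polynomial of the reflexive hull, using $c_3(\F^{**}) - c_3 = c_3(\F^{**}/\F) \geq 0$ together with the uniform bound $c_3(\F^{**}) \leq C(X,H,r,c_1,c_2)$ for $\mu$-stable reflexive sheaves from \cite[Prop.~3.6]{GK1} to produce $N$, and the observation that a $1$-dimensional quotient $\cQ = \F^{**}/\F$ forces $P_{\F^{**}}(t) = P(t)+P_{\cQ}(t) > P(t)+N$; openness then follows by quoting Koll\'ar's upper semicontinuity of $[\F] \mapsto P_{\F^{**}}(t)$ \cite[Prop.~28(3)]{Kol}. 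So your guess about the label is right, but Koll\'ar's result is applied to the Hilbert polynomial of the hull, not to the singular locus. Your argument instead stays entirely on the singularity-set side: the fiberwise non-locally-free locus $Z \subset M \times X$ is cut out by the $r$-th Fitting ideal of the family, Fitting ideals commute with base change (and for a torsion free sheaf of rank $r$ the $r$-th Fitting ideal is the unit ideal at $P$ exactly when the sheaf is free at $P$), so $Z_t = \Sing(\F_t)$; then $Z \to M$ is proper because $X$ is projective, Chevalley's theorem makes the locus in $Z$ where the local fiber dimension is $\geq 1$ closed, and its image in $M$ — which is exactly the complement of the desired locus — is closed because proper maps are closed. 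This is the clean assembly of the ingredients you list at the end (your closing sentence about ``constructibility over the relevant strata'' is muddled, but the pieces are all there). What each approach buys: yours is more elementary and more general — it uses no stability and no boundedness input, so it proves openness for an arbitrary flat family of torsion free sheaves — at the price of handling the absence of a universal family (fine, as you note: openness can be checked on the Quot-scheme cover or \'etale-locally) and of proving the semicontinuity yourself; the paper's route is shorter on the page because it delegates the hard semicontinuity to \cite{Kol}, but it genuinely needs $\mu$-stability to get the constant $N$.

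Two corrections to your first part. Your sentence ``$\F$ is locally free at a point $P$ if and only if $\F_P \cong \F^{**}_P$'' is false on a threefold: reflexive modules over a $3$-dimensional regular local ring need not be free (that is the whole point of this paper). Fortunately you never use the false direction: your actual argument for $\Sing(\F) = \Sing(\F^{**}) \cup \mathrm{Supp}(\F^{**}/\F)$ only uses that free implies reflexive, and is correct; it matches the paper's second part, which instead cites $S(\F) = \mathrm{Supp}(\ext^1(\F,\O_X)) \cup \mathrm{Supp}(\ext^2(\F,\O_X))$ from \cite{OSS} plus the $0$-dimensionality of singular sets of reflexive sheaves. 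Second, the alternative you float via the relative $\ext^1(\F,\O)$-sheaf is shakier than the Fitting-ideal route: $\ext$-sheaves do not commute with base change without extra hypotheses, and members of the moduli space need not have homological dimension $\leq 1$, so the parenthetical identification of $\ext^1$ with the failure of reflexivity does not apply to them; stick with Fitting ideals.
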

\begin{proof}
Denote the Hilbert polynomial determined by $X,H,r,c_1,c_2,c_3$ by $P(t)$. Suppose $[\F]$ is a closed point of $M_{X}^{H}(r,c_1,c_2,c_3)$ and $\cQ := \F^{**} / \F$ is zero or 0-dimensional. Then
\begin{equation} \label{lemma_eqn1}
P_{\F^{**}}(t) = P(t) + \frac{1}{2} c_3(\cQ),
\end{equation}
where $c_3(\cQ) \geq 0$ satisfies
\begin{equation} \label{c3eq}
c_3(\cQ) = c_3(\F^{**}) - c_3.
\end{equation}
By \cite[Prop.~3.6]{GK1} and the fact that $\F^{**}$ is rank $r$ $\mu_H$-stable with Chern classes $c_1,c_2$, we deduce that $c_3(\F^{**})$ is bounded above:
\begin{equation} \label{lemma_eqn3}
c_3(\F^{**}) \leq C(X,H,r,c_1,c_2),
\end{equation}
where $C:=C(X,H,r,c_1,c_2)$ is a constant only depending on $X,H,r,c_1,c_2$. We can choose this constant such that $C - c_3 \geq 0$ is even and we set $N:=\frac{1}{2}(C - c_3)$. From \eqref{lemma_eqn1}, \eqref{c3eq}, and \eqref{lemma_eqn3}, we conclude
\begin{equation} \label{inclMcirc}
M_{X}^{H}(r,c_1,c_2,c_3)^\circ  \subset \{[\F] \ : \ P_{\F^{**}}(t) \leq P(t)+N\}.
\end{equation}
Here for any two polynomials $p(t),q(t) \in \Q[t]$ the notation $p(t) \leq q(t)$ means $p(t) \leq q(t)$ for all $t \gg 0$. 

We claim that inclusion \eqref{inclMcirc} is an equality. Indeed if $\cQ$ is 1-dimensional, then 
$$
P_{\F^{**}}(t) = P(t) + P_{\cQ}(t) > P(t) + N,
$$
because $P_{\cQ}(t)$ is a degree 1 polynomial with positive leading coefficient. Zariski openness follows from the fact that the map $[\F] \mapsto P_{\F^{**}}(t)$ is upper semi-continuous with respect to $\leq$ by a result of J.~K\'ollar \cite[Prop.~28(3)]{Kol}.

Let $S(\F)$ denote the singularity set of a torsion free sheaf $\F$ on $X$. By \cite[Sect.~II.1.1]{OSS}, we have
$$
S(\F) = \mathrm{Supp}(\ext^1(\F,\O_X)) \cup \mathrm{Supp}(\ext^2(\F,\O_X)),
$$
where $\ext^1(\F,\O_X)$ has dimension $\leq 1$ and $\ext^2(\F,\O_X)$ has dimension $\leq 0$ by \cite[Prop.~1.1.10]{HL}. The second statement of the lemma easily follows from the fact that the singularity set of a reflexive sheaf on $X$ has dimension $\leq 0$.
\end{proof}

We are now ready to prove Theorem \ref{thm2}. We recall some notation and basic facts about constructible functions (e.g.~see \cite{Joy1} and references therein). Let $\Gamma$ be any abelian group. Suppose $\phi : X \rightarrow Y$ is a morphism of $\C$-schemes, locally of finite type, and $f : X(\mathbb{C}) \rightarrow \Gamma$ is a constructible function on the $\C$-valued points $X(\C)$ of $X$.  Then we define integration against the Euler characteristic measure by
$$
\int_X f \, de := \sum_{\kappa \in \Gamma} \kappa \cdot e\big( f^{-1}(\kappa) \big) \in \Gamma.
$$
The following map is a constructible function \cite[Prop.~3.8]{Joy1}
\begin{align*}
\phi_* f : Y(\C) &\rightarrow \Gamma \\
 y &\mapsto \int_{\phi^{-1}(y)} f|_{\phi^{-1}(y)} \, de.
\end{align*}
We will use the following key fact due to R.~MacPherson \cite{Mac} (see also \cite[Prop.~3.8]{Joy1})
\begin{equation} \label{Mac}
\int_X f \, de = \int_{Y} \phi_* f \, de.
\end{equation}
We use a slight generalization of this. Suppose $\phi : X \rightarrow Y$ is a set theoretic map between $\C$-schemes of finite type. We call $\phi$ a constructible morphism when $X$ can be written as a finite disjoint union $\bigsqcup_{i=1}^n X_i$  of locally closed subschemes $X_i \subset X$ of finite type such that $\phi|_{X_i} : X_i \rightarrow Y$ comes from a morphism of schemes. Then it is easy to see that $\phi_* f$ (defined as above) is a constructible function, for any constructible function $f : X(\C) \rightarrow \Gamma$, and \eqref{Mac} holds.

\begin{proof}[Proof of Theorem \ref{thm2}]
Fix $r,c_1,c_2$. Denote by $N_{X}^{H}(r,c_1,c_2,c_3)$ the moduli space of rank $r$ $\mu_H$-stable reflexive sheaves on $X$ with Chern classes $c_1,c_2,c_3$. By Lemma \ref{c3}, we can consider the following two maps
\begin{align*}
(\cdot)^{**} : \bigsqcup_{c_3} M_{X}^{H}(r,c_1,c_2,c_3)^\circ &\twoheadrightarrow \bigsqcup_{c_3} N_{X}^{H}(r,c_1,c_2,c_3), \\
(\cdot)^{*} : \bigsqcup_{c_3} N_{X}^{H}(r,c_1,c_2,c_3) &\twoheadrightarrow \bigsqcup_{c_3} N_{X}^{H}(r,-c_1,c_2,c_3).
\end{align*}
Restricted to $M_{X}^{H}(r,c_1,c_2,c_3)^\circ$ and $N_{X}^{H}(r,c_1,c_2,c_3)$, these are constructible morphisms.\footnote{In general, duals of members of a flat family do \emph{not} form a flat family.} The first map is surjective on closed points and the second map is bijective on closed points. At the level of closed points, the fibre of $(\cdot)^{**}$ over $[\mathcal{R}]$ is a union of Quot schemes
$$
\bigsqcup_{n=0}^{\infty} \Quot_X(\ccR,n).
$$

By \cite[Prop.~3.6]{GK1}, there exists a constant $C:=C(X,H,r,c_1,c_2)$ such that for any rank $r$ $\mu_H$-stable reflexive sheaf $\ccR$ on $X$ with Chern classes $c_1$, $c_2$ we have
$$
c_3(\ccR) \leq C.
$$
Likewise there exists a constant $C':=C(X,H,r,-c_1,c_2)$, such that for any rank $r$ $\mu_H$-stable reflexive sheaf $\ccR'$ on $X$ with Chern classes $-c_1$, $c_2$ we have
$$
c_3(\ccR') \leq C'.
$$
Using Lemma \ref{c3}, the Chern classes $c_3(\ccR)$ and $c_3(\ccR')$ are also bounded below:
$$
c_3(\ccR) = 2 \ell(\ext^1(\ccR,\O_X)) - c_3(\ccR^*) \geq - C'
$$
and likewise $c_3(\ccR') \geq - C$. We conclude that 
$$
\bigsqcup_{c_3} N_{X}^{H}(r,c_1,c_2,c_3), \ \bigsqcup_{c_3} N_{X}^{H}(r,-c_1,c_2,c_3)
$$
are of finite type. Define
\begin{align*}
f([\ccR]) &:= \sum_{n = 0}^{\infty} e\big( \Quot_X(\ccR,n) \big) \, q^{c_3(\ccR)-2n},  \\
g([\ccR]) &:= q^{c_3(\ccR)} P_{\ccR}(q^{-2}) \in \Z[q,q^{-1}], \\
P_{\ccR}(q) &:=\sum_{n = 0}^{\infty} e\big( \Quot_X(\ext^1(\ccR,\O_X),n) \big) \, q^{n},
\end{align*}
where the last sum is finite because $\ext^1(\ccR,\O_X)$ is zero or 0-dimensional and $P_{\ccR}(q)$ was introduced previously in Theorem \ref{cor}. 

Let $\phi := (\cdot)^{**}$ be the double dual map, then by \eqref{Mac}
\begin{align*}
\sum_{c_3} e\big( M_{X}^{H}(r,c_1,c_2,c_3)^\circ \big) \, q^{c_3} &= \sum_{c_3} q^{c_3} \int_{M_{X}^{H}(r,c_1,c_2,c_3)^\circ} 1 \, de \\
&= \sum_{c_3,c_3'} q^{c_3}  \int_{N_{X}^{H}(r,c_1,c_2,c_3')} \phi_*(1) \, de.
\end{align*}
Since the fibres of $\phi = (\cdot)^{**}$ are unions of Quot schemes $\Quot_X(\ccR,n)$, we find\footnote{Strictly speaking, $f$ is not a constructible function, because it is an infinite sum. However, it is constructible modulo $q^{-A}$ for \emph{arbitrary} $A >0$. Therefore the following equations  should be read modulo $q^{-A}$. Since $A > 0$ is arbitrary, the final equality, $\sum_{c_3} e\big( M_{X}^{H}(r,c_1,c_2,c_3)^\circ \big) \, q^{c_3}  =  \sfM(q^{-2})^{r e(X)} P_{r,c_1,c_2}(q)$, holds to all orders.}
\begin{align}
\begin{split} \label{fg}
\sum_{c_3} e\big( M_{X}^{H}(r,c_1,c_2,c_3)^\circ \big) \, q^{c_3} &= \sum_{c_3}  \int_{N_{X}^{H}(r,c_1,c_2,c_3)} f \, de \\
&= \sfM(q^{-2})^{r e(X)}  \sum_{c_3} \int_{N_{X}^{H}(r,c_1,c_2,c_3)} g \, de \\
&= \sfM(q^{-2})^{r e(X)} P_{r,c_1,c_2}(q),
\end{split}
\end{align}
where in the second line we use $f = M(q^{-2})^{re(X)} g$ (Theorem \ref{main}) and the third line is the definition of $P_{r,c_1,c_2}(q)$. Since the sums over $c_3$ on the right hand side of \eqref{fg} are finite, $P_{r,c_1,c_2}(q)$ is a Laurent polynomial. 

By Lemma \ref{c3} we have $c_3(\ccR)+c_3(\ccR^*) = 2 \ell(\ext^1(\ccR,\O_X))$. Therefore Theorem \ref{cor} gives
$$
q^{c_3(\ccR)} P_{\ccR}(q^{-2}) = q^{-c_3(\ccR^*)} P_{\ccR^*}(q^{2}).
$$
This translates into
$$
(\phi_* g)([\ccR]) = g([\ccR^*]) = g([\ccR]) \Big|_{q^{-1}}.
$$
Therefore 
\begin{align*}
P_{r,c_1,c_2}(q) &= \sum_{c_3} \int_{N_{X}^{H}(r,c_1,c_2,c_3)} g \, de \\
&= \sum_{c_3} \int_{N_{X}^{H}(r,-c_1,c_2,c_3)} \phi_* g \, de \\
&= \sum_{c_3} \int_{N_{X}^{H}(r,-c_1,c_2,c_3)} g \, de \Big|_{q^{-1}} = P_{r,-c_1,c_2}(q^{-1}). 
\end{align*}

Finally in the rank 2 case, we have $c_3(\ccR) = \ell(\ext^1(\ccR,\O_X))$. Therefore Theorem \ref{cor} implies
$$
q^{c_3(\ccR)} P_{\ccR}(q^{-2}) = q^{-c_3(\ccR)} P_{\ccR}(q^{2}),
$$
which gives $P_{2,c_1,c_2}(q) = P_{2,c_1,c_2}(q^{-1})$.
\end{proof}

\subsection{Reduction to affine case} \label{reductionsec}

In this section we first prove Theorem \ref{main} in the rather well-known case where $\F$ is locally free. Subsequently we reduce Theorem \ref{main} to the ``affine case''. 

When $X$ is a smooth variety and $Z \subset X$ is a closed subscheme, one can look at the formal neighbourhood of $Z \subset X$ which we denote by $\widehat{X}_Z$. On an open affine subset $U = \Spec A \subset X$, where $Z$ is given by the ideal $I$, we define the formal neighbourhood $\widehat{X}_Z$ by\footnote{Note that we define formal schemes by $\mathrm{Spec}$ of a ring rather than $\mathrm{Spf}$ of a ring.}
$$
\widehat{U}_{Z \cap U} = \Spec \varprojlim A /I^n.
$$
Since the map $\widehat{X}_Z \rightarrow X$ is flat, we can use $\widehat{X}_Z$ as part of an fpqc cover along which we can glue sheaves \cite[Tag 023T, Tag 03NV]{Sta}.

In this paper we will be using Quot schemes of 0-dimensional quotients of sheaves on several types of schemes. \extendspace{Originally Grothendieck's construction implies (in particular) that the Quot functor of} $0$-dimensional quotients is representable by a scheme $\Quot_X(\F,n)$ when $X$ is a projective $\C$-scheme and $\F$ is a coherent sheaf. This was extended to $X$ quasi-projective in \cite{AK}. T.~S.~Gustavsen, D.~Laksov, R.~M.~Skjelnes \cite{GLS} showed representability of the Quot functor of 0-dimensional quotients for $X = \Proj S$ (any graded $\C$-algebra $S$) or $X = \Spec A$ (any $\C$-algebra $A$) and $\F$ quasi-coherent. We need this level of generality because we will encounter the case $A = \C[\![x,y,z]\!]$, which is Noetherian but \emph{not of finite type}.

\begin{proposition} \label{locfree}
Let $X$ be a smooth quasi-projective threefold and let $\F$ be a rank $r$ locally free sheaf on $X$. Then
$$
\sum_{n=0}^{\infty} e\big(\Quot_X(\F,n)\big) \, q^n = \sfM(q)^{r e(X)}.
$$
\end{proposition}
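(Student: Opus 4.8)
The plan is to reduce to a purely local computation via the support morphism and then evaluate the local factor by torus localization, exploiting that the topological Euler characteristic is motivic (additive over locally closed strata and multiplicative over products) and that a $0$-dimensional quotient $\F \twoheadrightarrow \cQ$ splits canonically as a direct sum $\bigoplus_P \cQ_P$ over the finitely many points of its support. First I would set up the support morphism $\sigma \colon \Quot_X(\F,n) \to \Sym^n X$ sending a quotient to the cycle of its support counted with multiplicity. Stratifying $\Sym^n X$ by the partition type of this cycle, the fibre of $\sigma$ over a configuration of distinct points $P_1,\dots,P_k$ with multiplicities $n_1,\dots,n_k$ is the product of punctual Quot schemes $\prod_{i} \Quot_X(\F,n_i)_{P_i}$, where $\Quot_X(\F,m)_P$ denotes the closed subscheme of quotients set-theoretically supported at $P$.

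The second step is to observe that the punctual Quot scheme is a purely local object. Since $\F$ is locally free of rank $r$ and $X$ is a smooth threefold, we have $\widehat{\O}_{X,P} \cong \C[\![x,y,z]\!]$ and $\F$ is formally trivial near $P$, so $\Quot_X(\F,m)_P$ is isomorphic to the punctual Quot scheme $\Quot_{\C^3}(\O^{\oplus r},m)_0$ of length $m$ quotients of $\O^{\oplus r}$ supported at the origin. In particular $b_m := e\big(\Quot_X(\F,m)_P\big)$ is independent of $P$. Feeding the constant local factor $\sum_m b_m q^m$ into the standard power-structure (equivalently plethystic, or Euler-product) argument for Euler characteristics of punctual-support moduli, that is, summing the products $\prod_i b_{n_i}$ over all unordered configurations of points of $X$ and using the motivic nature of $e$, yields
$$
\sum_{n=0}^\infty e\big(\Quot_X(\F,n)\big)\,q^n = \Big(\sum_{m=0}^\infty b_m\,q^m\Big)^{e(X)}.
$$
For $r=1$ this is the classical Hilbert scheme computation of Cheah and G\"ottsche.

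It remains to identify the local factor as $\sfM(q)^r$. Here I would use the torus $T = (\C^*)^3 \times (\C^*)^r$ acting on $\C^3$ and on the $r$ summands of $\O^{\oplus r}$. A $T$-fixed quotient has $T$-invariant kernel; the $(\C^*)^r$-action forces this kernel to split as a direct sum over the $r$ factors, and the $(\C^*)^3$-action forces each summand to be a finite-colength monomial ideal of $\C[x,y,z]$, which are classified by plane partitions. Hence the $T$-fixed locus of $\Quot_{\C^3}(\O^{\oplus r},m)_0$ is the finite set of isolated fixed points given by $r$-tuples $(\pi_1,\dots,\pi_r)$ of plane partitions with $\sum_i|\pi_i|=m$. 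Since $e$ of a variety with a torus action equals $e$ of its fixed locus, $b_m$ equals the number of such tuples, and MacMahon's formula $\sum_\pi q^{|\pi|} = \sfM(q)$ gives $\sum_m b_m q^m = \sfM(q)^r$. Combining with the displayed factorization produces $\sfM(q)^{r\,e(X)}$.

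The main obstacle is the factorization step: although it is entirely standard for Hilbert schemes and extends routinely to Quot schemes of $0$-dimensional quotients, one must justify the support morphism and the product description of its fibres carefully in the quasi-projective (non-proper) setting, and verify that the constant local factor indeed enters as the base of an exponential with exponent $e(X)$ rather than, say, $e$ with compact supports. By contrast, the localization identification of $b_m$ is comparatively routine once the isolatedness of the $T$-fixed points is noted, and the smoothness of $X$ together with local freeness of $\F$ makes the reduction to the origin of $\C^3$ immediate.
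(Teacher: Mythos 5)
Your proof is correct and follows essentially the same route as the paper: both factor the generating function through the Hilbert--Chow/support morphism to $\Sym^n X$ with exponent $e(X)$ (the paper cites \cite[App.~A.2]{BK} for this step), and both compute the punctual local factor $\sfM(q)^r$ by torus localization. The only cosmetic difference is that the paper first uses the $(\C^*)^r$-scaling action to reduce to punctual Hilbert schemes and then invokes Cheah's formula (itself proved via the $(\C^*)^3$-action counting monomial ideals), whereas you run the combined $(\C^*)^3 \times (\C^*)^r$ localization in a single step.
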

\begin{proof}
Let $P$ be any closed point in $X$ and consider the punctual Quot scheme
$$
\Quot_X(\F,n)_0 \subset \Quot_X(\F,n),
$$
consisting of quotients $\F \twoheadrightarrow \cQ$ with $\mathrm{Supp} \, \cQ = \{P\}$.  We study the function
\begin{align*}
&g : \Z_{\geq 0} \rightarrow \Z, \\
&g(n) := e(\Quot_X(\F,n)_0),
\end{align*}
where $g(0):=1$. Since $\F$ is locally free, we have
\begin{align} \label{Quoteq1}
\Quot_X(\F,n)_0 \cong \Quot_X(\O_{X}^{\oplus r},n)_0.
\end{align}
There exists a torus action $T = \C^{*r}$ on $\Quot_{X}(\O_{X}^{\oplus r},n)_0$ defined by scaling the summands of $\O_{X}^{\oplus r}$. The fixed locus of this action is 
\begin{align} \label{Quoteq2}
\Quot_{X}(\O_{X}^{\oplus r},n)^T_0 \cong \bigsqcup_{n_1 + \cdots + n_r = n} \prod_{i=1}^{r} \Hilb^{n_i}(X)_0,
\end{align}
where $\Hilb^{n}(X)_0$ denotes the punctual Hilbert scheme of length $n$ subschemes of $X$ supported at $P$. Combining \eqref{Quoteq1} and \eqref{Quoteq2}, we obtain
\begin{align} \label{intermedQuot}
\sum_{n=0}^{\infty} g(n) \, q^n = \Bigg( \sum_{n=0}^{\infty} e(\Hilb^n(X)_0) \, q^n \Bigg)^{r} = \sfM(q)^r,
\end{align}
where the second equality follows from a result of J.~Cheah \cite{Che}.\footnote{This can be seen by noting that $e(\Hilb^n(X)_0) = e(\Hilb^n(\Spec \C[[x,y,z]])$. Using the standard $\C^{*3}$-action on $\Spec \C[[x,y,z]]$, this is the number of monomial ideals in $x,y,z$ defining a 0-dimensional scheme of length $n$.} 

Next we consider the Hilbert-Chow type morphism
\begin{align*}
&\mathrm{HC} : \Quot_X(\F,n) \rightarrow \Sym^n(X), \\ 
&[\F \twoheadrightarrow \cQ] \mapsto \sum_{i} n_i P_i,
\end{align*}
where $P_i$ are the $\C$-valued points of the support of $\cQ$ and $n_i$ is the length of $\cQ$ at $P_i$. Consider the constructible function
\begin{align*}
&G : \Sym^n(X) \rightarrow \Z, \\
&G\big(\sum_{i} n_i P_i\big) := e\big(\mathrm{HC}^{-1}\big(\sum_{i} n_i P_i\big)\big) = \prod_i g(n_i).
\end{align*}
Then
$$
\sum_{n=0}^{\infty} e\big(\Quot_X(\F,n)\big) \, q^n = \sum_{n=0}^{\infty} q^n \int_{\Sym^n(X)} G \, de = \Bigg( \sum_{n=0}^{\infty} g(n) \, q^n \Bigg)^{e(X)} = \sfM(q)^{r e(X)},
$$
where the second equality follows from  \cite[App.~A.2]{BK} and the third equality follows from \eqref{intermedQuot}.
\end{proof}
This proposition implies Theorem \ref{main} when $\F$ is locally free. We now reduce Theorem \ref{main} to the affine case. 
\begin{proposition} \label{redtolocal}
Let $X$ be a smooth projective threefold and let $\F$ be a rank $r$ torsion free sheaf on $X$ of homological dimension $\leq 1$. We assume the following: if $\Spec A$ is an affine scheme of one of the following types
\begin{itemize}
\item $\Spec A$ a Zariski open subset of $X$,
\item $\Spec A = \Spec \widehat{\O}_{X,P}$, where $\widehat{\O}_{X,P}$ is the completion of the stalk $\O_{X,P}$ at a closed point $P \in X$,
\end{itemize}
then
\begin{align} 
\label{affinemaineq}
\sum_{n=0}^{\infty} e\big(\Quot_{A}(\F|_{\Spec A},n)\big) \, q^n = \sfM(q)^{r e(\Spec A)} \sum_{n=0}^{\infty} e\big(\Quot_{A}(\ext^1(\F|_{\Spec A},\O_{\Spec A}),n)\big) \, q^n,
\end{align}
where $e(\Spec A) = 1$ for $A = \widehat{\O}_{X,P}$. Then Theorem \ref{main} is true for $X$ and $\F$.
\end{proposition}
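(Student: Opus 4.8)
The plan is to reduce the global Euler characteristics in Theorem \ref{main} to \emph{punctual} contributions, to evaluate those via the completed-local instance of \eqref{affinemaineq}, and then to reassemble everything using the power structure on Euler characteristics already exploited in Proposition \ref{locfree}. For a closed point $P \in X$ let $\Quot_X(\F,n)_P$ denote the punctual Quot scheme of quotients supported at $P$, and set
$$Z_P(q) := \sum_{n=0}^\infty e\big(\Quot_X(\F,n)_P\big)\, q^n, \qquad W_P(q) := \sum_{n=0}^\infty e\big(\Quot_X(\ext^1(\F,\O_X),n)_P\big)\, q^n.$$
A finite length quotient on the local scheme $\Spec \widehat{\O}_{X,P}$ is automatically supported at its closed point, so there are canonical identifications $\Quot_X(\F,n)_P \cong \Quot_{\widehat{\O}_{X,P}}(\widehat{\F}_P,n)$, and likewise for $\ext^1$ once one observes that completion is flat, so the completed stalk of $\ext^1(\F,\O_X)$ at $P$ is $\ext^1(\widehat{\F}_P,\widehat{\O}_{X,P})$. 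Thus $Z_P$ and $W_P$ are precisely the two generating functions in the completed-local instance of \eqref{affinemaineq}, and since $e(\Spec \widehat{\O}_{X,P}) = 1$ that instance supplies the pointwise identity
$$Z_P(q) = \sfM(q)^{r}\, W_P(q)$$
at every closed point $P$.

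Next I would globalize via the Hilbert--Chow morphism $\mathrm{HC} : \Quot_X(\F,n) \to \Sym^n X$, whose fibre over $\sum_i n_i P_i$ is $\prod_i \Quot_X(\F,n_i)_{P_i}$, so that its fibrewise Euler characteristic is the multiplicative function $\prod_i e\big(\Quot_X(\F,n_i)_{P_i}\big)$ built from the $Z_{P_i}$. Exactly as in the proof of Proposition \ref{locfree}, the integration formula of \cite[App.~A.2]{BK} then writes the global generating function as a product of local generating functions raised to the Euler characteristics of the strata on which they are constant:
$$\sum_{n=0}^\infty e\big(\Quot_X(\F,n)\big)\, q^n = \prod_\alpha Z_\alpha(q)^{e(S_\alpha)},$$
$$\sum_{n=0}^\infty e\big(\Quot_X(\ext^1(\F,\O_X),n)\big)\, q^n = \prod_\alpha W_\alpha(q)^{e(S_\alpha)},$$
where $X = \bigsqcup_\alpha S_\alpha$ is a constructible stratification on which $P \mapsto Z_P$ (equivalently the isomorphism type of $\widehat{\F}_P$) is constant, with common values $Z_\alpha, W_\alpha$. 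Substituting $Z_\alpha = \sfM(q)^{r} W_\alpha$ and using additivity of the Euler characteristic, $\sum_\alpha e(S_\alpha) = e(X)$, collapses the first product to $\sfM(q)^{r e(X)} \prod_\alpha W_\alpha(q)^{e(S_\alpha)}$, which is exactly $\sfM(q)^{r e(X)}$ times the global $\ext^1$ generating function. This is Theorem \ref{main}.

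The step that must be pinned down is the constructibility underlying the power-structure formula: one needs a \emph{single} stratification on which the full series $Z_P$ and $W_P$ are simultaneously constant, not merely each coefficient for fixed $n$. This follows from constructibility of the isomorphism type of the completed stalk of a coherent sheaf (for instance via Fitting ideals and generic flatness of the relative Quot schemes), since $Z_P$ and $W_P$ depend only on $\widehat{\F}_P$ and on $\ext^1(\widehat{\F}_P,\widehat{\O}_{X,P})$; here the Zariski-open instance of \eqref{affinemaineq} is what organizes these contributions compatibly on each affine chart, where the relevant Quot schemes are representable and of finite type by \cite{GLS, AK}. I expect the only genuine obstacle to be the bookkeeping in families: verifying that the punctual-to-completed-local identification and the flat base change for $\ext^1$ persist in families, so that the Hilbert--Chow fibration and the integration formula apply verbatim as in Proposition \ref{locfree}.
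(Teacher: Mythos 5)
Your route is genuinely different from the paper's, and its skeleton is viable. The paper never argues pointwise: it picks a hyperplane-type closed set $H$ meeting the one-dimensional part $C$ of $\mathrm{Supp}\,\ext^1(\F,\O_X)$ in finitely many points $P_1,\dots,P_\ell$ and missing the zero-dimensional part, then takes the explicit fpqc cover of $X$ by the affine complement $U=\Spec A$, the formal neighbourhood $\widehat{X^\circ}_{H^\circ}$ (on which $\F$ is locally free, so Proposition \ref{locfree} applies), and the completions $\Spec\widehat{\O}_{X,P_i}$; descent for $0$-dimensional quotients factors both generating functions into a finite product, and the Euler-characteristic bookkeeping is Lemma \ref{technical}. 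That argument uses \emph{both} bulleted instances of \eqref{affinemaineq}, whereas you use only the completed-local one, applied at every closed point, and then globalize by Hilbert--Chow and the power-structure formula. The trade-off is clear: the paper's finite explicit cover avoids all constructibility-in-families questions, while your approach concentrates the entire difficulty into one constructibility statement.

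That statement is where your proposal has a genuine gap: the justification you offer --- constructibility of the isomorphism type of the completed stalk $\widehat{\F}_P$ (via Fitting ideals and generic flatness) --- is \emph{false} for coherent sheaves. Take $Z = V\big(xy(x+y)(x+ty)\big) \subset \C^3$ with coordinates $x,y,t$, and $\F = \O_Z$. At a point $P_t=(0,0,t)$ with $t\neq 0,1$ the tangent cone of $\widehat{\O}_{Z,P_t}$ is the cone over four concurrent planes whose cross-ratio is $t$; since the associated graded ring is an invariant of the complete local ring and graded isomorphisms act on the tangent cone linearly, two such stalks can only be isomorphic when the cross-ratios agree up to the finite $S_4$-action. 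Hence the completed stalks along the $t$-axis run through a continuum of pairwise non-isomorphic classes, and the isomorphism type is not a constructible function of $P$. So the lemma you lean on cannot hold, even though your instinct that $Z_P$ and $W_P$ depend only on $\widehat{\F}_P$ is correct.

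Fortunately, what your argument actually needs is much weaker and is true: for each fixed $n$, the functions $P \mapsto e\big(\Quot_X(\F,n)_P\big)$ and $P \mapsto e\big(\Quot_X(\ext^1(\F,\O_X),n)_P\big)$ are constructible on $X$. This follows from constructibility of fibrewise Euler characteristics of a morphism of finite-type schemes (MacPherson \cite{Mac}, Joyce \cite{Joy1} --- the same input the paper invokes in the proof of Theorem \ref{thm2}), applied to $\mathrm{HC}:\Quot_X(\F,n)\to\Sym^n X$ restricted along the small diagonal $P \mapsto nP$. One must then run the power-structure identity coefficient-by-coefficient: for each $N$, choose a single finite constructible stratification on which all coefficients of $Z_P$ and $W_P$ up to order $N$ are constant (a common refinement of finitely many constructible level sets); the $q^N$-coefficient of $\prod_\alpha Z_\alpha^{e(S_\alpha)}$ involves only those, so the identity $\sum_n e(\Quot_X(\F,n))q^n = \sfM(q)^{re(X)}\sum_n e(\Quot_X(\ext^1(\F,\O_X),n))q^n$ follows order by order, rather than from one stratification trivializing the full series as your displayed formula implicitly assumes. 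With this replacement --- and noting that your punctual identifications $\Quot_X(\F,n)_P \cong \Quot_{\widehat{\O}_{X,P}}(\widehat{\F}_P,n)$ and the flat base change for $\ext^1$ are indeed fine, since a finite-length module over a local ring is supported at the unique closed point --- your proof closes up.
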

\begin{proof}
Take $X$ and $\F$ as in the proposition.  Assume \eqref{affinemaineq} is true for any $\Spec A$ as described in the proposition. We will show that the formula of Theorem \ref{main} is true for $X$ and $\F$. 

Let $S \subset X$ be the scheme-theoretic support of $\ext^1(\F,\O_X)$. Then $S$ has dimension $\leq 1$ by \cite[Prop.~1.1.10]{HL}. Let $C$ be the union of the 1-dimensional connected components of $S$ and let $Z$ be the union of the 0-dimensional connected components of $S$. We note that $\F|_{X \setminus S}$ is locally free (by \cite[Ch.~II]{OSS}) and $\F|_{X \setminus C}$ is reflexive by \cite[Prop.~1.1.10]{HL}. 

There exists a closed subset $H \subset X$ such that $C \cap H$ is 0-dimensional, $H \cap Z = \varnothing$, and the complement $U$ of $H$ is affine. This can be seen by embedding $X$ in a projective space and intersecting with a general hyperplane. Write $U = \Spec A \subset X$. Let $P_1, \ldots, P_\ell$ be the closed points of $C \cap H$. Define
\begin{align*}
X^{\circ} := X \setminus C, \ H^{\circ} := H \setminus (C \cap H).
\end{align*}
We take the following fpqc cover of $X$
\begin{align*}
U=\Spec A, \ \widehat{X^{\circ}}_{H^\circ}, \ \widehat{X}_{\{P_i\}} = \Spec \widehat{\O}_{X,P_i} \qquad \forall i=1, \ldots \ell.
\end{align*}
By fpqc descent and the fact that we are considering 0-dimensional quotients (so there are no gluing conditions on overlaps), we obtain a geometrically bijective constructible morphism\footnote{The definition of a constructible morphism was given before the proof of Thm.~1.3. Such a map is called geometrically bijective, when it  is a bijection on $\C$-valued points. If $\phi : X \rightarrow Y$ is a geometrically bijective constructible morphism, then $e(X) = e(Y)$ by \eqref{Mac}.} from $\Quot_X(\F,n)$ to
\begin{align*}
\bigsqcup_{a+b+n_1+\cdots+n_\ell= n} \!\!\!\!\!\!\!\!\!\!\!\! \Quot_U(\F|_U,a) \times \Quot_{\widehat{X^\circ}_{H^\circ}}(\F|_{\widehat{X^\circ}_{H^\circ}},b) \times \prod_{i=1}^{\ell} \Quot_{\widehat{X}_{\{P_i\}}}(\F|_{\widehat{X}_{\{P_i\}}},n_i).
\end{align*}
We obtain
\begin{align}\label{mess}
\sum_{n=0}^{\infty} e\big(\Quot_X(\F,n)\big) \, q^n&= \sum_{n=0}^{\infty} e\big(\Quot_{A}(\F|_{\Spec A},n)\big) \, q^n \cdot \sum_{n=0}^{\infty} e\big(\Quot_{\widehat{X^\circ}_{H^\circ}}(\F|_{\widehat{X^\circ}_{H^\circ}},n)\big) \, q^n\nonumber \\
&\cdot \prod_{i=1}^{\ell} \sum_{n=0}^{\infty} e\big(\Quot_{\widehat{X}_{\{P_i\}}}(\F|_{\widehat{X}_{\{P_i\}}},n)\big) \, q^n.
\end{align}
The expressions
$$
\sum_{n=0}^{\infty} e\big(\Quot_{A}(\F|_{A},n)\big), \ \sum_{n=0}^{\infty} e\big(\Quot_{\widehat{X}_{\{P_i\}}}(\F|_{\widehat{X}_{\{P_i\}}},n)\big) \, q^n
$$
are known because we assume \eqref{affinemaineq} is true. Now $\F|_{\widehat{X^\circ}_{H^\circ}}$ is locally free because $X^\circ \cap S = Z$ and $H^\circ \cap S = \varnothing$. Moreover Proposition \ref{locfree} implies\footnote{As formulated, Proposition \ref{locfree} only applies to a smooth quasi-projective threefold. However the same proof works in the present setting. The essential point is that for any $\C$-valued point $P$ of $\widehat{X^\circ}_{H^\circ}$, the formal neighborhood of $P$ in $\widehat{X^\circ}_{H^\circ}$ is $\mathrm{Spec}\,\C[[x,y,z]]$ by \cite[(7.8.3)]{EGA} and \cite[TAG 07NU, TAG 0323]{Sta}. The Hilbert-Chow morphism is constructed at the level of generality we need by D.~Rydh \cite{Ryd}.}
$$
\sum_{n=0}^{\infty} e\big(\Quot_{\widehat{X^\circ}_{H^\circ}}(\F|_{\widehat{X^\circ}_{H^\circ}},n)\big) \, q^n = \sfM(q)^{r e(\widehat{X^\circ}_{H^\circ})}.
$$
We conclude that \eqref{mess} is equal to
\begin{align*}
&\sfM(q)^{r (e(\Spec A) + e(\widehat{X^\circ}_{H^\circ}) +\sum_{i=1}^{\ell} e(\Spec \widehat{\O}_{X,P_i}))} \sum_{n=0}^{\infty} e\big(\Quot_{A}(\ext^1(\F|_{A},\O_{\Spec A}),n)\big) \, q^n  \\
&\qquad\qquad\qquad\qquad\qquad\qquad \cdot \prod_{i=1}^{\ell} \sum_{n=0}^{\infty} e\big(\Quot_{\widehat{X}_{\{P_i\}}}(\ext^1(\F|_{\widehat{X}_{\{P_i\}}}, \O_{\widehat{X}_{\{P_i\}}}),n)\big) \, q^n \\
&= \sfM(q)^{r (e(\Spec A) + e(\widehat{X^\circ}_{H^\circ}) +\sum_{i=1}^{\ell} e(\Spec \widehat{\O}_{X,P_i}))} \sum_{n=0}^{\infty} e\big(\Quot_X(\ext^1(\F,\O_X),n)\big) \, q^n. 
\end{align*}
The proposition follows from 
\begin{equation*}
e(\Spec A) + e(\widehat{X^\circ}_{H^\circ}) + \sum_{i=1}^{\ell} e(\Spec \widehat{\O}_{X,P_i}) = e(\Spec A) + e(H^\circ) + \ell = e(X).
\end{equation*}
Here we used that $\widehat{X^\circ}_{H^\circ}$ and $H^\circ$ have homeomorphic subspaces of $\C$-valued points and each $\Spec \widehat{\O}_{X,P_i}$ has a single $\C$-valued point (Lemma \ref{technical} below). 
\end{proof}

\begin{lemma} \label{technical}
Let $A$ be a finitely generated $\C$-algebra and $I \subsetneq A$ an ideal. Denote by $\widehat{A}$ the formal completion of $A$ with respect to $I$. The subspace of closed points of $\Spec A/I$ is homeomorphic with the subspace of closed points of $\Spec \widehat{A}$. Moreover the subspace of closed points of $\Spec \widehat{A}$ equals the subspace of $\C$-valued points of $\Spec \widehat{A}$.
\end{lemma}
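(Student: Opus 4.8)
The plan is to reduce everything to two standard facts about $I$-adic completion and then apply the Nullstellensatz to the finite-type quotient $A/I$ rather than to $\widehat{A}$ itself (which is Noetherian but not of finite type over $\C$, so Zariski's lemma does not apply to it directly). Throughout, write $\widehat{I} := I\widehat{A}$ for the extension of $I$ to $\widehat{A} = \varprojlim_n A/I^n$. Since $A$ is a finitely generated $\C$-algebra it is Noetherian, so its $I$-adic completion $\widehat{A}$ is again Noetherian, $\widehat{I} = I\widehat{A}$ coincides with the completion of $I$, and one has the standard isomorphisms $\widehat{A}/\widehat{I}^{\,n} \cong A/I^n$ for all $n$; in particular $\widehat{A}/\widehat{I} \cong A/I$. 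The quotient map $\widehat{A} \to \widehat{A}/\widehat{I}$ then realises $\Spec(A/I) \cong \Spec(\widehat{A}/\widehat{I})$ as a homeomorphism onto the closed subset $V(\widehat{I}) \subseteq \Spec\widehat{A}$.

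First I would show that $\widehat{I}$ is contained in the Jacobson radical of $\widehat{A}$. Since $\widehat{A}$ is $\widehat{I}$-adically complete and separated, for any $x \in \widehat{I}$ the partial sums of the geometric series $\sum_{n\geq 0} x^n$ form a Cauchy sequence and converge to an inverse of $1-x$; hence $1 + \widehat{I} \subseteq \widehat{A}^{\times}$ and therefore $\widehat{I} \subseteq \mathrm{Jac}(\widehat{A})$. Consequently every maximal ideal of $\widehat{A}$ contains $\widehat{I}$, so the maximal ideals of $\widehat{A}$ are exactly the preimages of the maximal ideals of $\widehat{A}/\widehat{I} \cong A/I$. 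Topologically, all closed points of $\Spec\widehat{A}$ lie on the closed subset $V(\widehat{I})$. Because $V(\widehat{I})$ is closed, a point of $V(\widehat{I})$ is closed in $\Spec\widehat{A}$ if and only if it is closed in $V(\widehat{I})$; combining this with the homeomorphism $V(\widehat{I}) \cong \Spec(A/I)$ identifies the subspace of closed points of $\Spec\widehat{A}$ with the subspace of closed points of $\Spec(A/I)$, which is the first assertion.

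For the second assertion I would invoke the Nullstellensatz. Let $\mathfrak{m} \subset \widehat{A}$ be any maximal ideal. By the previous step $\mathfrak{m} \supseteq \widehat{I}$, so $\widehat{A}/\mathfrak{m} \cong (A/I)/\overline{\mathfrak{m}}$ for a maximal ideal $\overline{\mathfrak{m}} \subset A/I$. As $A/I$ is a finitely generated $\C$-algebra and $\C$ is algebraically closed, Zariski's lemma gives $(A/I)/\overline{\mathfrak{m}} \cong \C$. Hence every maximal ideal of $\widehat{A}$ has residue field $\C$, i.e.\ it is the kernel of a $\C$-algebra homomorphism $\widehat{A} \to \C$; conversely the kernel of any such homomorphism is a maximal ideal with residue field $\C$. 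This shows the closed points of $\Spec\widehat{A}$ coincide with its $\C$-valued points.

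The only genuinely non-formal inputs are the commutative-algebra facts that $\widehat{A}/\widehat{I}^{\,n} \cong A/I^n$ (needing Noetherianity of $A$, which holds since $A$ is of finite type over $\C$) and $\widehat{I} \subseteq \mathrm{Jac}(\widehat{A})$; both are standard. The main point requiring care is purely topological: one must keep track of the subspace topologies and use that \emph{all} closed points of $\Spec\widehat{A}$ land in $V(\widehat{I})$, so that the comparison of the two closed-point loci is a homeomorphism and not merely a set-theoretic bijection.
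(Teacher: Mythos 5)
Your proof is correct, and it follows the same skeleton as the paper's: reduce both assertions to the finite-type quotient $A/I$ by showing that every maximal ideal of $\widehat{A}$ contains $I\widehat{A}$, then apply the Nullstellensatz to $A/I$. The differences lie in how the two key steps are justified, and in both cases your route is more self-contained. Where you prove $1+I\widehat{A} \subseteq \widehat{A}^{\times}$ by summing the geometric series (valid since $\widehat{A}$ is $I\widehat{A}$-adically complete and separated, which as you note uses Noetherianity of $A$), the paper simply cites Bourbaki for the statement that all maximal ideals of $\widehat{A}$ are of the form $\widehat{\mathfrak{m}}$ with $\mathfrak{m} \subset A$ maximal and containing $I$; these are the same fact. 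The more genuine divergence is in the direction ``$\C$-valued point $\Rightarrow$ closed point'': you observe that a $\C$-algebra homomorphism $\widehat{A} \to \C$ is automatically surjective (it restricts to the identity on constants), so its kernel is maximal --- a one-line argument. The paper instead embeds $A \hookrightarrow \widehat{A}$ via Krull's intersection theorem, applies the Nullstellensatz to $\mathfrak{q} := \mathfrak{p} \cap A$, and uses flatness of completion ($\widehat{\mathfrak{q}} = \mathfrak{q} \otimes_A \widehat{A}$) to conclude $\mathfrak{p} = \widehat{\mathfrak{q}}$. Your argument is simpler and fully adequate for the lemma; the paper's longer one buys the extra (here unneeded) information that every $\C$-valued point of $\Spec \widehat{A}$ is literally the extension of a maximal ideal of $A$ containing $I$. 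Finally, your attention to the topological point --- all closed points of $\Spec \widehat{A}$ lie in the closed subset $V(I\widehat{A})$, so the comparison of closed-point loci is a restriction of a homeomorphism rather than a mere bijection --- is exactly what the first assertion requires, and plays the same role as the paper's use of the closed immersion $\Spec A/I \hookrightarrow \Spec \widehat{A}$.
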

\begin{proof}
The surjection $\widehat{A} \twoheadrightarrow A/I$ induces a homeomorphism from $\Spec A/I$ onto its image in $\Spec \widehat{A}$. This map sends a prime ideal $\mathfrak{q}$ containing $I$ to $\widehat{\mathfrak{q}}$. Furthermore this map sends a maximal ideal $\mathfrak{m}$ containing $I$ to the maximal ideal $\widehat{\mathfrak{m}}$ and all maximal ideals of $\widehat{A}$ arise in this way \cite[Ch.~III.3.4, Prop.~8]{Bou1}. 

Any closed point $\widehat{\mathfrak{m}} \subset \widehat{A}$ is a $\C$-valued point because $\widehat{A} / \widehat{\mathfrak{m}} \cong \widehat{A / \mathfrak{m}} \cong \C$. Conversely suppose $\mathfrak{p} \subset \widehat{A}$ is a $\C$-valued point. The surjection $A \twoheadrightarrow A/I$ factors as
$$
A \hookrightarrow \widehat{A} \twoheadrightarrow A/I,
$$
where the first map is injective by Krull's theorem \cite[Cor.~10.18]{AM}. The ideal $\mathfrak{q} := \mathfrak{p} \cap A$ is a $\C$-valued point and therefore corresponds to a maximal ideal ($A$ is finitely generated). We claim $I \subset \mathfrak{q}$ and $\mathfrak{p} = \widehat{\mathfrak{q}}$, which implies $\mathfrak{p}$ is maximal by the first part of the proof. 

By the first part of the proof, there exists a maximal ideal $\mathfrak{m}$ containing $I$ such that $\mathfrak{p} \subset \widehat{\mathfrak{m}}$ (any prime ideal lies in a maximal ideal). Therefore
$$
\mathfrak{q} = \mathfrak{p} \cap A \subset \widehat{\mathfrak{m}} \cap A = \mathfrak{m}.
$$
Since $\mathfrak{q}$ is maximal, we have $\mathfrak{q} = \mathfrak{m}$, so $\mathfrak{q}$ contains $I$ and $\mathfrak{p} \subset \widehat{\mathfrak{q}}$. By \cite[Prop.~1.17, 10.13]{AM}, we  obtain the other inclusion
$$
\mathfrak{p} \supset \mathfrak{q} \otimes_A \widehat{A} = \widehat{\mathfrak{q}}. 
$$
This proves the claim. We conclude that the collection of $\C$-valued points of $\widehat{A}$ and the collection of closed points of $\widehat{A}$ coincide.
\end{proof}

\section{Pandharipande-Thomas pairs and Quot schemes} \label{section2}

In this section we assume $A$ is a 3-dimensional Noetherian regular $\C$-algebra and $M$ is an $A$-module of homological dimension 1. Later we will be interested in the case $A, M$ arise from $X, \F$ as in Proposition~\ref{redtolocal}.

Denote the category of quasi-coherent sheaves on $\Spec A$ by $\textrm{QCoh}(\Spec A)$ and by $A\textrm{-Mod}$ the category of $A$-modules. The global section functor
$$
\Gamma(\Spec A,\cdot) : \textrm{QCoh}(\Spec A) \rightarrow A\textrm{-Mod}
$$
is an equivalence of categories and we denote the inverse by $\widetilde{(\cdot)}$ as in \cite[Sect.~II.5]{Har1}. We will mostly work in the latter category $A\textrm{-Mod}$. For instance $\ext^i(\widetilde{M}, \O_{\Spec A})$ corresponds to $\Ext^i(M,A)$ by
$$
\Ext^i(M,A) = \Gamma(\Spec A, \ext^i(\widetilde{M}, \O_{\Spec A})).
$$

We give a very brief outline of this section. Denote by $\T_A$ the stack of 0-dimensional finitely generated $A$-modules.\footnote{The stack $\T_A$ also contains the zero module.} We first construct a Cohen-Macaulay curve $C \subset \Spec A$, with ideal $I_C \subset A$, and an effective divisor $S \subset \Spec A$ related to $M$. It turns out that we can use $C,S$ to construct injections
$$
\Ext^2(Q,M(S)) \hookrightarrow \Ext^2(Q,I_C) \cong \Ext^1(Q,A/I_C),
$$
for all $Q \in \T_A$. By varying $Q$ over the stack $\T_A$, we obtain a closed locus $\Sigma$ inside the \emph{moduli space of Pandharipande-Thomas pairs} on $C$. The main result of this section is Theorem \ref{key}, which establishes a geometric bijection\footnote{A geometric bijection is a morphism of schemes, which is a bijection on $\C$-valued points.} from  
$$
\Quot_A(\Ext^1(M,A)) := \bigsqcup_{n=0}^{\infty} \Quot_A(\Ext^1(M,A),n)
$$
onto the locus $\Sigma$. In Section \ref{Hall}, the locus $\Sigma$ naturally arises from a Hall algebra calculation involving
\begin{align*}
\Quot_A(M) &:= \bigsqcup_{n=0}^{\infty} \Quot_A(M,n). 
\end{align*}
The Hall algebra calculation of Section \ref{Hall} together with the geometric bijection of Theorem \ref{key} will lead to the proof of \eqref{affinemaineq} and therefore Theorem \ref{main}.

\subsection{Construction of local auxiliary curve}

Our key technical tool is the following theorem from Bourbaki \cite[Ch.~VII.4.9, Thm.~6, p.~270]{Bou2}.
\begin{theorem}[Bourbaki] \label{Bourbaki}
Let $A$ be a Noetherian integrally closed ring and $M$ a finitely generated rank $r$ torsion free $A$-module. Then there exists an $A$-module homomorphism $M \rightarrow A$ with free kernel.
\end{theorem}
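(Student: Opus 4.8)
The plan is to argue by induction on the rank $r$, reducing the inductive step to the existence of a single saturated free rank-one submodule, and to produce that submodule from the codimension-one behaviour of $M$, where integral closedness is decisive. For $r=1$ the module embeds into its fraction field $K$ (send $m$ to the scalar $\lambda$ with $m=\lambda m_0$ for a fixed $0\neq m_0$), and clearing a common denominator realises $M$ as a nonzero ideal of $A$; the resulting injection $M\to A$ has kernel $0$, which is free. For $r\geq 2$ I would look for $m\in M$, $m\neq 0$, with $M/Am$ torsion free. Granting this, $M':=M/Am$ is torsion free of rank $r-1$, so by induction there is $\bar\phi\colon M'\to A$ with free kernel; composing with the quotient $M\twoheadrightarrow M'$ gives $\phi$, whose kernel sits in
\[
0\to Am\to\ker\phi\to\ker\bar\phi\to 0 .
\]
Since $Am\cong A$ (as $m$ is nontorsion) and $\ker\bar\phi$ is free, hence projective, this sequence splits and $\ker\phi\cong Am\oplus\ker\bar\phi$ is free. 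Thus everything reduces to the claim that \emph{$M$ admits an element $m$ with $M/Am$ torsion free}.

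To analyse this claim I would embed $M\hookrightarrow M\otimes_A K\cong K^r$ and observe that $M/Am$ is torsion free if and only if $Am=Km\cap M$, a ``saturation'' condition I would check prime by prime. At a height-one prime $\mathfrak p$ the localization $A_{\mathfrak p}$ is a DVR (this is where $A$ integrally closed, hence Krull, enters), so $M_{\mathfrak p}$ is free and $m$ is part of a basis exactly when its image in the fibre $M\otimes\kappa(\mathfrak p)$ is nonzero; call such $m$ \emph{primitive at $\mathfrak p$}. If $m$ is primitive at every height-one prime and $\lambda m\in M$ for some $\lambda\in K$, then $\lambda\in A_{\mathfrak p}$ for all such $\mathfrak p$, whence $\lambda\in\bigcap_{\operatorname{ht}\mathfrak p=1}A_{\mathfrak p}=A$ — again using that a Noetherian integrally closed domain is the intersection of its height-one localizations. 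Hence primitivity at all height-one primes forces $Km\cap M=Am$, i.e.\ $M/Am$ is torsion free.

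It remains to find one $m$ primitive at \emph{all} (a priori infinitely many) height-one primes, and this is the step I expect to be the main obstacle. The enabling finiteness is that any fixed nonzero element fails to be primitive at only finitely many height-one primes, since these are precisely the height-one primes in the support of the torsion submodule of $M/Am$, a finitely generated torsion module. I would start from any $m_0\neq 0$ with finite bad set $\{\mathfrak p_1,\dots,\mathfrak p_k\}$, use prime avoidance for the proper submodules $\mathfrak p_iM_{\mathfrak p_i}\cap M$ to pick $n\in M$ that is $K$-linearly independent from $m_0$ and primitive at each $\mathfrak p_i$, and then set $m=m_0+tn$ for a scalar $t$. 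At the $\mathfrak p_i$ the term $tn$ dominates, so $m$ becomes primitive once $t$ avoids the $\mathfrak p_i$; at primes where $n$ degenerates while $m_0$ does not, the term $m_0$ still survives; and the only genuinely new failures occur at the finitely many height-one primes where $m_0$ and $n$ are parallel in the fibre — namely the height-one support of the torsion of $\wedge^2 M/A(m_0\wedge n)$ — each of which forbids a single residue of $t$.

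Since in our setting $\C\subseteq A$, I would finish by choosing $t$ to be a generic nonzero constant: such $t$ is a unit, hence avoids all the $\mathfrak p_i$, and by infiniteness of $\C$ it can simultaneously avoid the finitely many forbidden residues coming from the parallel locus. This yields an $m$ primitive at every height-one prime, completing the claim and therefore the theorem. The delicate point, as indicated, is controlling the parallel locus of $m_0$ and $n$ so that passing from ``finitely many bad primes per element'' to ``an element with no bad primes'' introduces no new degeneracies; the $\wedge^2$ finiteness together with the infinitude of the ground field is what makes this possible.
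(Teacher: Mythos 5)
Your proposal takes a genuinely different route from the paper, for the simple reason that the paper does not prove this statement at all: it is quoted as a black box from Bourbaki \cite[Ch.~VII.4.9, Thm.~6]{Bou2}, where it is established for arbitrary Noetherian integrally closed domains via the theory of pseudo-isomorphisms over Krull domains. Your induction on rank is sound: the splitting $\ker\phi\cong Am\oplus\ker\bar\phi$ is correct; the reduction of torsion-freeness of $M/Am$ to primitivity of $m$ at height-one primes correctly uses that $A_{\mathfrak p}$ is a DVR and that $A=\bigcap_{\operatorname{ht}\mathfrak p=1}A_{\mathfrak p}$ for a Noetherian normal domain; and both finiteness claims hold because the torsion submodules of $M/Am_0$ and of $\wedge^2M/A(m_0\wedge n)$ are finitely generated torsion modules, hence have nonzero annihilator, and a nonzero ideal is contained in only finitely many height-one primes. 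The one thing you give up relative to the paper's citation is generality: the avoidance argument choosing $n$ and the genericity argument choosing $t$ both use that $\C\subseteq A$, so as written you prove the theorem only for algebras containing an infinite field, not for an arbitrary Noetherian integrally closed ring as stated. This is exactly the case the paper needs (the theorem is applied only to $3$-dimensional regular Noetherian $\C$-algebras in Section \ref{section2}), and you flag the specialization yourself, so this is less a gap than a trade: Bourbaki's machinery buys full generality, your argument buys a short, self-contained, geometric proof. If you want full generality within your framework, replace the generic constant by prime avoidance: the parallel primes $\mathfrak q_1,\dots,\mathfrak q_l$ at which both $m_0$ and $n$ are primitive are necessarily distinct from the bad primes $\mathfrak p_1,\dots,\mathfrak p_k$ of $m_0$, so one can choose $t\in\bigcap_j\mathfrak q_j\setminus\bigcup_i\mathfrak p_i$; then at each $\mathfrak p_i$ the term $tn$ survives, at each $\mathfrak q_j$ one has $t\equiv 0$ and hence $m\equiv m_0\neq 0$ in the fibre, and no new failures arise elsewhere (the choice of $n$ can likewise be made without an infinite field by working over the semilocal PID obtained by inverting $A\setminus\bigcup_i\mathfrak p_i$).
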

Therefore there exists an ideal $I \subset A$ and a short exact sequence
\begin{equation} \label{cosection1}
0 \rightarrow A^{\oplus (r-1)} \rightarrow M \rightarrow I \rightarrow 0.
\end{equation}
Note that $I \neq A$ because $M$ is not locally free by assumption ($M$ has homological dimension 1). Although our constructions will depend on the cosection $M \rightarrow A$, our final result, equation \eqref{affinemaineq}, will not.\footnote{In \cite{GK2} we assumed the existence of a global version of this cosection. For rank 2 reflexive sheaves, this global cosection is precisely the data featuring in Hartshorne's version of the Serre correspondence \cite{Har2}. This led to the assumptions made in \cite{GK2} and explained in the introduction.} We start with a lemma.
\begin{lemma} \label{lem0}
In \eqref{cosection1}, $I=I_C(-S)$, where $S \subset \Spec A$ is a (possibly zero) effective divisor and $C \subset \Spec A$ is a non-empty Cohen-Macaulay curve.
\end{lemma}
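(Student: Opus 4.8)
The plan is to peel off the codimension-one part of $I$ using that $A$ is regular---hence a normal domain and locally factorial---and then to identify the remaining codimension-$\ge 2$ part with a Cohen-Macaulay curve via a depth computation. First I would record that $I$ has homological dimension $\le 1$: applying $\Hom_A(-,N)$ to \eqref{cosection1} and using that $A^{\oplus(r-1)}$ is free while $M$ has homological dimension $1$, the long exact sequence gives $\Ext^i_A(I,N)=0$ for all $i\ge 2$ and all $N$, so $\mathrm{pd}_A I\le 1$.

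Next I would produce the divisor $S$ from the reflexive hull. Since each localization $A_{\mathfrak p}$ is a regular local ring, hence a UFD, the ring $A$ is locally factorial and every rank-$1$ reflexive $A$-module is invertible. Thus $I^{**}$ is a line bundle on $\Spec A$, and because $I\subset A$ inside $\mathrm{Frac}(A)$ while $A$ is divisorially closed, one has $I^{**}\subset A^{**}=A$. An invertible ideal is the ideal of an effective Cartier divisor, so $I^{**}=\O(-S)$ for a unique (possibly zero) effective divisor $S\subset\Spec A$. I would then set $I_C:=I\otimes\O(S)$; since $I\subset I^{**}=\O(-S)$, tensoring by $\O(S)$ shows $I_C\subset A$ is an honest ideal, and by construction $I=I_C(-S)$ with $(I_C)^{**}=A$. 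The equality $(I_C)^{**}=A$ says $I_C$ is not contained in any height-one prime, so $C:=V(I_C)$ has codimension $\ge 2$ and $A/I_C$ has no associated prime of height $\le 1$; in particular $\dim A/I_C\le 1$.

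It then remains to show $C$ is a non-empty Cohen-Macaulay curve, which I expect to be the crux. Tensoring by the invertible sheaf $\O(S)$ preserves projective dimension, so $\mathrm{pd}_A I_C\le 1$ and hence $\mathrm{pd}_A(A/I_C)\le 2$. Working locally at a maximal ideal $\mathfrak m$ and applying the Auslander-Buchsbaum formula over the regular local ring $A_{\mathfrak m}$ gives $\operatorname{depth}(A/I_C)_{\mathfrak m}\ge 3-2=1$ wherever $(A/I_C)_{\mathfrak m}\neq 0$. For non-emptiness, note that if $I_C=A$ then $I=\O(-S)$ is invertible, hence projective, so \eqref{cosection1} splits and $M\cong A^{\oplus(r-1)}\oplus\O(-S)$ is locally free, contradicting that $M$ is not locally free. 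Therefore $I_C\neq A$ and $\dim A/I_C\ge 1$. Combining $\dim A/I_C\le 1$ with $\operatorname{depth}\le\dim$ forces $\operatorname{depth}(A/I_C)_{\mathfrak m}=\dim(A/I_C)_{\mathfrak m}=1$ at every point of $C$, so $A/I_C$ is Cohen-Macaulay of pure dimension $1$; in particular $C$ carries no embedded or lower-dimensional components and is a Cohen-Macaulay curve, giving the factorization $I=I_C(-S)$.

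The main obstacle is this final step: squeezing the depth estimate out of the homological dimension bound and pairing it with the codimension estimate to rule out embedded and zero-dimensional components and thereby force purity and Cohen-Macaulayness of $C$. By contrast, the construction of $S$ is comparatively formal once local factoriality of the regular ring $A$ is invoked, and the preservation of projective dimension under twisting by a line bundle is routine. I would take care to phrase the Cohen-Macaulay and dimension assertions as local conditions at the closed points of $\Spec A$, since $A$ need not be local in the Zariski-open case.
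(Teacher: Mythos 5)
Your proof is correct, and after the (shared) first step it takes a genuinely different route from the paper's. Both proofs produce $S$ and $I_C$ identically: $I^{**}\subset A$ is invertible because the regular ring $A$ is locally factorial, and twisting by $\O(S)$ gives $I=I_C(-S)$ with $V(I_C)$ of codimension $\geq 2$. From there the paper argues by duality: if $C$ were empty or $0$-dimensional, then $\Ext^1(I_C(-S),A)=0$, so dualizing \eqref{cosection1} exhibits $M^*$ as an extension of $A^{\oplus(r-1)}$ by the line bundle $I^*$ and forces $\Ext^1(M,A)=0$; by \cite[Prop.~1.1.10]{HL} the module $M$ is then reflexive, hence $M\cong (M^*)^*$ is locally free, a contradiction; Cohen-Macaulayness is then deduced from $\Ext^2(I_C,A)\cong\Ext^2(M,A)=0$, i.e.\ $\Ext^3(A/I_C,A)=0$, citing \cite[Prop.~1.1.10]{HL} again. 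You instead rule out $C=\varnothing$ by the splitting argument (if $I_C=A$ then $I$ is invertible, so \eqref{cosection1} splits and $M$ is locally free), and you obtain non-$0$-dimensionality, purity, and Cohen-Macaulayness in a single stroke from $\mathrm{pd}_A(A/I_C)\leq 2$ and Auslander--Buchsbaum: $\operatorname{depth}\geq 1$ at every closed point of the support, which together with $\dim\leq 1$ and $\operatorname{depth}\leq\dim$ forces $\operatorname{depth}=\dim=1$ there. Your depth computation is essentially the local-duality content behind the paper's citations of \cite[Prop.~1.1.6, 1.1.10]{HL}, unpacked into elementary commutative algebra; it buys a self-contained argument that kills isolated points and embedded points simultaneously, whereas the paper's version outsources that to the cited propositions but needs no depth bookkeeping. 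Two minor points of care: your sentence ``therefore $I_C\neq A$ and $\dim A/I_C\geq 1$'' is justified only by combining non-emptiness with the depth bound (the splitting argument alone does not exclude $0$-dimensional $C$); since the depth bound is established just before and invoked just after, the logic closes, but the order of deductions should be stated explicitly. And your appeal to $\operatorname{depth}A_{\mathfrak m}=3$ at all maximal ideals of the support is valid in both of the paper's cases (a finite-type $\C$-domain of dimension $3$, and $\C[[x,y,z]]$), which you rightly flagged as needing attention.
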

\begin{proof}
There exists an effective divisor $S \subset \Spec A$ and a closed subscheme $C \subset \Spec A$ of dimension $\leq 1$ such that
$$
I \cong I_C(-S).
$$
This follows by embedding $I \hookrightarrow I^{**}$ and observing that $I^{**} / I$ has dimension~$\leq 1$ and that $I^{**} \subset A$ is a line bundle. 

Next we show that $C$ is not empty or 0-dimensional. If it were, then according to  \cite[Prop.~1.1.6]{HL} (see Notation in Section \ref{intro}), we would get
$\Ext^1(I_C(-S),A)  \cong \Ext^2((A/I_C)(-S),A) = 0$. Therefore \eqref{cosection1} induces a short exact sequence
$$
0 \rightarrow I^* \rightarrow M^* \rightarrow A^{\oplus(r-1)} \rightarrow 0
$$
and $\Ext^1(M,A) = 0$. This short exact sequence implies that $M^*$ is locally free, because $I^*$ is rank 1 reflexive \cite[Cor.~1.2]{Har2} and hence locally free  \cite[Prop.~1.9]{Har2}. The vanishing $\Ext^1(M,A) = 0$ implies that $M$ is reflexive by \cite[Prop.~1.1.10]{HL}. Hence $M \cong M^{**}$ is locally free contradicting the assumption that $M$ has homological dimension 1.

Finally we show that the 1-dimensional subscheme $C \subset X$ is Cohen-Macaulay. Indeed \eqref{cosection1} implies
$$
\Ext^2(I_C(-S),A) \cong \Ext^2(M,A) = 0,
$$
where the last equality follows from the fact that $M$ has homological dimension 1. Therefore
$$\Ext^3(A/I_C,A) \cong \Ext^2(I_C,A) = 0$$
and $C$ is Cohen-Macaulay by \cite[Prop.~1.1.10]{HL}.
\end{proof}

From now on, we will use the following version of \eqref{cosection1} 
\begin{equation} \label{cosection2}
0 \rightarrow A(S)^{\oplus (r-1)} \rightarrow M(S) \rightarrow I_C \rightarrow 0,
\end{equation}
where $S \subset X$ is the effective divisor and $\varnothing \neq C \subset X$ is the Cohen-Macaulay curve of Lemma \ref{lem0}. We are interested in
$$
\Quot_A(M) := \bigsqcup_{n=0}^{\infty} \Quot_A(M,n).
$$
We can write this as 
$$
\Quot_A(M) = \bigsqcup_{Q \in \T_A} \Hom(M,Q)^{\onto} / \sim,
$$
where $\Hom(M,Q)^{\onto}$ is the set of surjective $A$-module homomorphisms, and $\sim$ is the equivalence relation induced by automorphisms of $Q$. In Section \ref{Hall}, we will perform a Hall algebra calculation, which relates this to 
$$
\bigsqcup_{Q \in \T_A} \Ext^2(Q,M(S))^{\pure} / \sim.
$$
The goals of this section are firstly to define this locus and secondly to give a nice geometric characterization of it (Theorem \ref{key}). We start with a lemma.

\begin{lemma} \label{lem2}
For any $Q \in \T_A$, the map \eqref{cosection2} induces an inclusion
$$
\Ext^2(Q,M(S)) \hookrightarrow \Ext^2(Q,I_C).
$$
\end{lemma}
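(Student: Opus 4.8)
The plan is to apply the covariant functor $\Hom_A(Q,-)$ to the short exact sequence \eqref{cosection2} and to read off the injectivity directly from the resulting long exact sequence of $\Ext$-groups. Writing $A' := A(S)^{\oplus(r-1)}$ for brevity, the relevant segment of that sequence is
$$
\Ext^2(Q,A') \longrightarrow \Ext^2(Q,M(S)) \longrightarrow \Ext^2(Q,I_C) \longrightarrow \Ext^3(Q,A').
$$
Consequently, the map $\Ext^2(Q,M(S)) \rightarrow \Ext^2(Q,I_C)$ induced by the surjection $M(S) \twoheadrightarrow I_C$ of \eqref{cosection2} will be injective as soon as I show that the leftmost term $\Ext^2(Q,A')$ vanishes.

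To establish this vanishing, the key observation is that $A(S)$ is a line bundle, hence locally free, while $Q \in \T_A$ is $0$-dimensional, so its support has codimension $3$ in the $3$-dimensional regular ring $A$. The codimension vanishing of \cite[Prop.~1.1.6]{HL}, which holds for any locally free coefficient sheaf (one may reduce the coefficients to $\O$ by the projection formula $\Ext^i(Q,A(S)) \cong \Ext^i(Q,A) \otimes A(S)$ if preferred) and which, as recorded in the Notation of Section \ref{intro}, remains valid over the regular Noetherian $\C$-algebras we work with via local duality, then gives $\Ext^i(Q,A(S)) = 0$ for all $i < 3$. In particular $\Ext^2(Q,A(S)) = 0$, and therefore $\Ext^2(Q,A') = \Ext^2(Q,A(S))^{\oplus(r-1)} = 0$, as needed.

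Combining the two steps yields the claimed inclusion $\Ext^2(Q,M(S)) \hookrightarrow \Ext^2(Q,I_C)$. There is no genuine obstacle here: the argument is a routine long-exact-sequence chase, and the only input demanding care is the codimension-$3$ vanishing $\Ext^{<3}(Q,A) = 0$, which is precisely where the $0$-dimensionality of $Q$ and the regularity of $A$ enter. The single point worth double-checking is that this vanishing is being applied in the possibly non-finite-type setting $A = \widehat{\O}_{X,P}$ relevant later in the paper; but this is exactly the case covered by the local-duality strengthening of \cite[Prop.~1.1.6]{HL} cited above, so the conclusion holds uniformly across all the affine schemes appearing in Proposition \ref{redtolocal}.
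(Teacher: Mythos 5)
Your proof is correct and follows exactly the paper's own argument: apply $\Hom(Q,\cdot)$ to \eqref{cosection2}, and kill the term $\Ext^2(Q,A(S)^{\oplus(r-1)})$ via the twist-removal isomorphism and the codimension vanishing $\Ext^2(Q,A)=0$ from \cite[Prop.~1.1.6]{HL} (valid in the regular Noetherian setting by local duality, as the paper notes). No gaps; the remark about the completed local ring case is a welcome but inessential extra.
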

\begin{proof}
Applying $\Hom(Q,\cdot)$ to \eqref{cosection2} gives
$$
\cdots \longrightarrow \Ext^2(Q,A(S)^{\oplus (r-1)}) \longrightarrow \Ext^2(Q,M(S)) \longrightarrow \Ext^2(Q,I_C) \longrightarrow \cdots.
$$
The statement follows from
\begin{align*}
\Ext^2(Q,A(S)^{\oplus (r-1)}) &\cong \Ext^2(Q,A)^{\oplus (r-1)} \otimes A(S), \\ 
\Ext^2(Q,A) &= 0,
\end{align*}
where in the second line we use that $Q$ is 0-dimensional \cite[Prop.~1.1.6]{HL}.
\end{proof}

From the previous lemma, we obtain an inclusion
\begin{equation} \label{inducedincl}
\Ext^2(Q,M(S)) \hookrightarrow \Ext^2(Q,I_C),
\end{equation}
for all $Q \in \T_A$. 
By the short exact sequence
$
0 \rightarrow I_C \rightarrow A \rightarrow A / I_C \rightarrow 0
$
and the fact that $Q$ is 0-dimensional, we see that 
\begin{equation} \label{twice}
\Ext^2(Q,I_C) \cong \Ext^1(Q,A / I_C).
\end{equation}
The elements of $\Ext^1(Q,A / I_C)$ correspond to short exact sequences
$$
0 \rightarrow A / I_C \rightarrow F \rightarrow Q \rightarrow 0,
$$
where $F$ is a 1-dimensional $A$-module. Such an extension is known as a \emph{Pandharipande-Thomas pair} on $\Spec A$ whenever $F$ is a pure $A$-module \cite{PT1}. We denote the locus of PT pairs by $\Ext^1(Q,A / I_C)^{\pure}$. Put differently, a PT pair on $\Spec A$ consists of $(F,s)$ where
\begin{itemize}
\item $F$ is a pure dimension 1 $A$-module, 
\item $s : A \rightarrow F$ is an $A$-module homomorphism with 0-dimensional cokernel.
\end{itemize}
See \cite{PT1} for details. Let $\PT_A(C)$ be the moduli space of PT pairs $(F,s)$ on $\Spec A$ where $F$ has scheme theoretic support $C$. We can write this as 
\begin{align*}
\PT_A(C) = \bigsqcup_{Q \in \T_A} \Ext^1(Q,A/I_C)^{\pure} / \sim, 
\end{align*}
where $\sim$ denotes the equivalence relation induced by automorphisms of $Q$. Using inclusion \eqref{inducedincl} and \eqref{twice}, we define  
\begin{align}
\begin{split} \label{locus}
\Sigma :=\,&\bigsqcup_{Q \in \T_A} \Ext^2(Q,M(S))^{\pure} / \sim, \\
\Ext^2(Q,M(S))^{\pure} :=\,&\Ext^2(Q,M(S)) \cap \Ext^1(Q,A / I_C)^{\pure}.
\end{split}
\end{align}
This defines a closed subset $\Sigma \subset \PT_A(C)$. The locus $\Sigma$ depends on the choice of cosection \eqref{cosection2}. In the following theorem, we give a nice geometric characterization of $\Sigma$. Later, when taking Euler characteristics in Section \ref{Hallfinal}, it leads to a proof of equation \eqref{affinemaineq}.
\begin{theorem} \label{key}
Let $A$ be a 3-dimensional regular Noetherian $\C$-algebra and $M$ a torsion free $A$-module of homological dimension 1. Fix a cosection \eqref{cosection2}. Then there exists a morphism of $\C$-schemes
$$
\Quot_A(\Ext^1(M,A)) \rightarrow \PT_A(C),
$$
which induces a bijection between the $\C$-valued points of $\Quot_A(\Ext^1(M,A))$ and the $\C$-valued points of the locus $\Sigma$ defined in \eqref{locus}.
\end{theorem}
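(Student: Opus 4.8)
The plan is to reduce Theorem \ref{key} to a natural fibrewise identification over each fixed $0$-dimensional module and then to promote it to a morphism of moduli spaces. Writing $N:=\Ext^1(M,A)$, recall that $\Quot_A(\Ext^1(M,A))=\bigsqcup_{\cQ\in\T_A}\Hom(N,\cQ)^{\onto}/\!\sim$, while by \eqref{locus} we have $\Sigma=\bigsqcup_{Q\in\T_A}\Ext^2(Q,M)^{\pure}/\!\sim$. First I would match the index sets by setting $Q:=\Ext^3(\cQ,A)$, so that $\Ext^3(Q,A)\cong\cQ$ by double duality \cite[Prop.~1.1.6]{HL}. The key algebraic input is a canonical isomorphism
$$
\Hom\big(\Ext^1(M,A),\cQ\big)\cong\Ext^2\big(\Ext^3(\cQ,A),M\big),
$$
functorial in $\cQ$. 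To prove it I would take a two-term locally free resolution $0\to E_1\xrightarrow{d} E_0\to M\to 0$ (which exists since $M$ has homological dimension $1$). Dualizing gives $N=\coker(d^{*}\colon E_0^{*}\to E_1^{*})$, whence $\Hom(N,\cQ)=\ker(d\otimes\id\colon E_1\otimes\cQ\to E_0\otimes\cQ)$. On the other hand, since $Q$ is $0$-dimensional one has $\Ext^{i}(Q,E_j)=0$ for $i\neq 3$ and $\Ext^{3}(Q,E_j)\cong\Ext^3(Q,A)\otimes E_j\cong\cQ\otimes E_j$; feeding the resolution of $M$ into $\Hom(Q,-)$ then identifies $\Ext^2(Q,M)$ with the same kernel $\ker(\id\otimes d\colon \cQ\otimes E_1\to\cQ\otimes E_0)$. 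By inspection this isomorphism intertwines the inclusion $\Ext^2(Q,M)\hookrightarrow\Ext^2(Q,I_C)$ of \eqref{inducedincl} with precomposition by a fixed map $\psi$, as explained below.

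Next I would build the morphism $\Quot_A(\Ext^1(M,A))\to\PT_A(C)$ itself. Over the Quot scheme the universal surjection $N\to\cQ^{\mathrm{univ}}$ has a kernel, and applying the relative versions of the dualities above---using cohomology and base change for $\Ext$-sheaves as in \cite[Prop.~3.1]{Sch}---produces a flat family of extensions $0\to A/I_C\to F^{\mathrm{univ}}\to Q^{\mathrm{univ}}\to 0$, i.e.\ a family of pairs supported on $C$. This yields the classifying morphism to $\PT_A(C)$, and the fibrewise computation below shows its image lies in $\Sigma$.

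The heart of the argument is the equivalence, for a class $\phi\in\Hom(N,\cQ)$ with corresponding $e\in\Ext^2(Q,M)$, between surjectivity of $\phi$ and purity of the resulting $F$. I would argue as follows. From $0\to A/I_C\to F\to Q\to 0$ one gets $\Ext^3(F,A)\cong\coker\big(\Ext^2(A/I_C,A)\xrightarrow{\partial}\Ext^3(Q,A)\big)$, using $\Ext^2(Q,A)=0$ and $\Ext^3(A/I_C,A)=0$ (as $C$ is Cohen--Macaulay, by Lemma \ref{lem0} and \cite[Prop.~1.1.10]{HL}); moreover $F$ is pure if and only if $\Ext^3(F,A)=0$, i.e.\ if and only if $\partial\colon\Ext^2(A/I_C,A)\to\cQ$ is surjective. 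Under the duality of the first paragraph this connecting map is identified with the composite $\Ext^2(A/I_C,A)\xrightarrow{\psi}N\xrightarrow{\phi}\cQ$, where $\psi$ is induced by the cosection \eqref{cosection2} together with the section \eqref{section}. The decisive point is that $\psi$ is \emph{surjective}: dualizing \eqref{cosection2} by $\Ext^{\bullet}(-,A)$ and using $\Ext^1(A(S)^{\oplus(r-1)},A)=0$ exhibits $N$ (up to the twist by $S$) as a quotient of $\Ext^2(A/I_C,A)=\Ext^1(I_C,A)$. Hence $\phi\circ\psi$ is surjective if and only if $\phi$ is, so $F$ is pure exactly when $\phi$ is a surjection. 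This gives the fibrewise bijection $\Hom(N,\cQ)^{\onto}\leftrightarrow\Ext^2(Q,M)^{\pure}$, which is equivariant for the $\Aut(\cQ)\cong\Aut(Q)$ actions by functoriality, hence descends to the desired bijection on $\C$-valued points.

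I expect the main obstacle to be naturality and twisting bookkeeping rather than any single hard idea. One must verify that the canonical isomorphism of the first paragraph genuinely intertwines the inclusion into $\Ext^2(Q,I_C)$ with precomposition by $\psi$, and that the coboundary $\partial$ in the purity computation equals $\phi\circ\psi$ under all the dualities---while tracking the twist by $S$ introduced through \eqref{cosection2} and \eqref{section} throughout. The second delicate point is promoting the pointwise construction to an honest morphism: the representability of $\Quot$ and of $\PT_A(C)$ recalled earlier, together with base change for $\Ext$-sheaves \cite[Prop.~3.1]{Sch}, should suffice, but checking flatness of $F^{\mathrm{univ}}$ and that the construction is inverse to the universal one on $\C$-points will require care.
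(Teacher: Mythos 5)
Your proposal is correct in substance, and its skeleton coincides with the paper's: a fibrewise identification over each $0$-dimensional module, the criterion ``$F$ pure $\Leftrightarrow$ $\phi$ surjective'' obtained from $\Ext^3(F,A)\cong\coker(\partial)$ plus surjectivity of $\psi$ (which is Step 2 of Section \ref{rank2}), and cohomology-and-base-change for $\Ext$-sheaves \cite[Prop.~3.1]{Sch} to promote the pointwise construction to a morphism (Step 4). Where you genuinely differ is the mechanism for the fibrewise bijection. The paper never writes down your canonical isomorphism $\Hom(\Ext^1(M,A),\cQ)\cong\Ext^2(\Ext^3(\cQ,A),M)$; instead it characterizes which triangles $I_C \rightarrow I^\mdot \rightarrow Q[-1]$ lie in the image of the inclusion of Lemma \ref{lem2} by a lifting criterion (the Yoneda map $\Ext^1(I_C,A(S))\rightarrow\Ext^3(Q,A(S))$ must factor through $\Ext^1(M,A)$), and then builds the bijection by explicitly dualizing PT pairs with $(\cdot)^D_{A(S)}$ as in diagram \eqref{Amindiag0}. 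Your resolution-theoretic isomorphism replaces both of these steps, and the real payoff is rank-uniformity: surjectivity of $\psi$ only uses $\Ext^1(A(S)^{\oplus(r-1)},A(S))=0$ when dualizing \eqref{cosection2}, so your argument needs no case distinction, whereas the paper proves $r=2$ first and then runs a separate inductive construction (the filtration $M_1,\dots,M_r$ and the chain of inclusions \eqref{chainincl} of Section \ref{higherr}) to handle $r>2$. The two compatibilities you flag as the main risk --- that the isomorphism intertwines the inclusion into $\Ext^2(Q,I_C)$ with precomposition by $\psi$, and that $\partial=\phi\circ\psi$ --- do hold: both are instances of associativity of composition in the derived category once one checks that your kernel description of the isomorphism agrees with the Yoneda pairing $\Ext^1(M,A)\otimes\Ext^2(Q,M)\rightarrow\Ext^3(Q,A)$; so this is bookkeeping, not a gap. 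What the paper's more explicit route buys is that its dualization recipe of Step 3 transfers verbatim to flat families in Step 4.

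One concrete correction to your family paragraph: do not take the kernel of the universal surjection $\pi^*\Ext^1(M,A)\rightarrow\cQ^{\mathrm{univ}}$ and dualize it. Since $M$ is only torsion free of homological dimension $1$, the module $\Ext^1(M,A)$ may be $1$-dimensional and non-pure, so its derived dual is not concentrated in a single degree and does not reproduce the pair. What your own fibrewise recipe dictates --- and what the paper does in Step 4 --- is to take the kernel $\cK$ of the composite $\pi^*\big((A/I_C)^D_{A(S)}\big)\twoheadrightarrow\pi^*\Ext^1(M,A)\rightarrow\cQ^{\mathrm{univ}}$, i.e.\ the family version of $\phi\circ\psi$. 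This $\cK$ is fibrewise pure of dimension $1$, so $R\hom(\cK,\cdot)$ is concentrated in degree $2$ by \cite[Prop.~1.1.10]{HL} together with \cite[Prop.~3.1]{Sch}, and dualizing that short exact sequence yields the $B$-flat family of PT pairs and hence the desired morphism $\Quot_A(\Ext^1(M,A))\rightarrow\PT_A(C)$.
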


The next two subsections are devoted to the proof of this theorem.

\subsection{The rank 2 case} \label{rank2}

We first prove Theorem \ref{key} for $r=2$. This case is similar to the main result of \cite{GK2}, but with some minor modifications. We also show why the map of Theorem \ref{key} is a morphism of schemes, which was not discussed in detail in \cite{GK2}.

\begin{proof}[Proof of Theorem \ref{key} for $r=2$]
We first construct a set theoretic map at the level of $\C$-valued points from $\Sigma$ to $\Quot_A(\Ext^1(M,A))$ and show it is a bijection. In Step 4 we show that the inverse map is a morphism of schemes. 

\bigskip

\noindent \textbf{Step 1:} In this step we give a characterization of the image of $\Ext^1(Q[-1],M(S))$ of the inclusion of Lemma~\ref{lem2} 
$$
\Ext^1(Q[-1],M(S)) \hookrightarrow \Ext^1(Q[-1],I_C).
$$
An element of the Ext group $\Ext^1(Q[-1],M(S))$ corresponds to an exact triangle 
$$
M(S) \rightarrow J^\mdot \rightarrow Q[-1].
$$
The image of this element in $\Ext^1(Q[-1],I_C)$ is the third column of the following diagram, where all rows and columns are exact triangles, all squares commute, and the third column is given by the octahedral axiom:
\begin{displaymath}
\xymatrix
{
A(S) \ar[r] \ar@{=}[d] & M(S) \ar[r] \ar[d] & I_C \ar@{-->}[d] \\
A(S) \ar[r] & J^\mdot \ar[r] \ar[d] & I^\mdot \ar@{-->}[d] \\
& Q[-1] \ar@{=}[r] & Q[-1].
}
\end{displaymath}
An element $I_C \rightarrow I^\mdot \rightarrow Q[-1]$ of $\Ext^1(Q[-1],I_C)$ lies in the image of the inclusion of Lemma \ref{lem2} if and only if there exists a map $I^\mdot \rightarrow A(S)[1]$ such that the following diagram commutes
\begin{displaymath}
\xymatrix
{
I_C \ar[r] \ar[dr] & I^\mdot \ar@{-->}^<<<<{\exists}[d] \\
& A(S)[1],
}
\end{displaymath}
where the map $I_C \rightarrow A(S)[1]$ comes from \eqref{cosection2}. 

This leads to the following characterization of the image of the inclusion of Lemma \ref{lem2}: Suppose we are given an exact triangle $I_C \rightarrow I^\mdot \rightarrow Q[-1]$. It induces an exact sequence 
$$
\Ext^1(I^\mdot, A(S)) \rightarrow \Ext^1(I_C,A(S)) \rightarrow \Ext^3(Q,A(S)).
$$ 
Then $I_C \rightarrow I^\mdot \rightarrow Q[-1]$ lies in the image of the inclusion of Lemma \ref{lem2} if and only if
$$
\xi \in \mathrm{im}\Big( \Ext^1(I^\mdot, A(S)) \rightarrow \Ext^1(I_C,A(S)) \Big),
$$
where $\xi \in \Ext^1(I_C,A(S))$ is the extension determined by the cosection \eqref{cosection2}. Denote by
$$
s_\xi : A \longrightarrow \Ext^1(I_C,A(S)) 
$$
the $A$-module homomorphism that sends $1$ to $\xi$. Consequently an element $I_C \rightarrow I^\mdot \rightarrow Q[-1]$ lies in the image of inclusion of Lemma \ref{lem2} if and only if the composition
$$
A \stackrel{s_\xi}{\longrightarrow} \Ext^1(I_C,A(S)) \longrightarrow \Ext^3(Q,A(S))
$$
is zero. The cokernel of $s_\xi$ can be computed by applying $\Hom(\cdot, A(S))$ to the short exact sequence \eqref{cosection2}
$$
0 \rightarrow A(S) \rightarrow M^* \rightarrow A \stackrel{s_\xi}{\rightarrow} \Ext^1(I_C,A(S)) \rightarrow \Ext^1(M,A) \rightarrow 0, 
$$
where we used $\Hom(I_C,A(S)) \cong A(S)$ because $C$ has codimension 2 \cite[Prop.~1.1.6]{HL}. Also note that 
$$
\Ext^1(I_C,A(S)) \cong \Ext^2(A / I_C, A(S)) = (A / I_C)^D_{A(S)},
$$
using Notation of Section \ref{intro}. By \cite[Thm.~21.5]{Eis}, this is isomorphic to 
$$
\omega_C \otimes \omega_{A}^{-1}(S), 
$$
which is pure 1-dimensional by \cite[Sect.~1.1]{HL}.\footnote{As an aside, we note that if $\Ext^1(M,A)$ is zero or 0-dimensional, then $((A/I_C)^D_{A(S)}, s_\xi)$ is a PT pair. We will not use this.} 

We conclude that an element $I_C \rightarrow I^\mdot \rightarrow Q[-1]$ lies in the image of the inclusion of Lemma \ref{lem2} if and only if there exists an $A$-module homomorphism 
\begin{equation} \label{factor}
\begin{gathered}
\xymatrix
{
\Ext^1(I_C,A(S)) \ar[dr] \ar[r] & \Ext^1(M,A) \ar@{-->}^<<<<{\exists}[d] \\
& \Ext^3(Q,A(S)),
}
\end{gathered}
\end{equation}
such that the triangle in the diagram commutes, where 
$$
\Ext^3(Q,A(S)) =: Q^{D}_{A(S)}. 
$$

\noindent \textbf{Step 2:} In this step we show that an element in the image of the inclusion
$$
\Ext^1(Q[-1],M(S)) \subset \Ext^1(Q[-1],I_C) \cong \Ext^1(Q,A/I_C)
$$ 
lies in $\Ext^1(Q,A/I_C)^{\pure}$ if and only if the map in \eqref{factor} is surjective. Fix an element of $\Ext^1(Q[-1],I_C)$
\begin{equation*} 
I_C \rightarrow I^\mdot \rightarrow Q[-1]
\end{equation*}
and consider the induced exact sequence
$$
\cdots \longrightarrow \Ext^1(I_C,A(S)) \longrightarrow \Ext^3(Q,A(S)) \longrightarrow \Ext^2(I^\mdot, A(S)) \longrightarrow 0.
$$
Here we use that $\Ext^2(I_C,A) \cong \Ext^3(A/I_C,A)=0$, because $C$ is Cohen-Macaulay (Lemma \ref{lem0} and \cite[Prop.~1.1.10]{HL}). 

Next we write $I^\mdot = \{A \rightarrow F\}$. This is possible because $\Ext^1(Q[-1],I_C) \cong \Ext^1(Q,A/I_C)$ (by \eqref{twice}), so any exact triangle $I_C \rightarrow I^\mdot \rightarrow Q[-1]$ is uniquely determined by an extension $0 \rightarrow A/I_C \rightarrow F \rightarrow Q \rightarrow 0$. Therefore we have an exact triangle $I^\mdot \rightarrow A \rightarrow F$. Dualizing induces an isomorphism
$$
\Ext^2(I^\mdot,A) \cong \Ext^3(F,A).
$$
Moreover $F$ is pure if and only if $\Ext^3(F,A) = 0$ \cite[Prop.~1.1.10]{HL}. The claim follows because $\Ext^3(F,A) = 0$ if and only if 
$$
\Ext^2(I^\mdot,A(S)) \cong \Ext^2(I^\mdot,A) \otimes A(S) = 0. 
$$

\noindent \textbf{Step 3:} In this step we construct a set theoretic bijective map at the level of $\C$-valued points
\begin{equation} \label{setmap}
\Sigma \subset \PT_A(C) \rightarrow \Quot_A(\Ext^1(M,A)),
\end{equation}
where $\Sigma$ was defined in \eqref{locus}. 

For any 0-dimensional $A$-module $Q$ and any pure dimension 1 $A$-module $F$ we have \cite[Prop.~1.1.10]{HL}
\begin{equation} \label{doubleD}
Q^{DD} \cong Q, \ F^{DD} \cong F.
\end{equation}
More generally a PT pair 
$$
0 \rightarrow A / I_C \rightarrow F \rightarrow Q \rightarrow 0
$$
dualizes to a short exact sequence
$$
0 \rightarrow F^D \rightarrow (A / I_C)^D \rightarrow Q^D \rightarrow 0,
$$
where $F^D$ is pure and $Q^D$ is 0-dimensional.

We summarize the results of Steps 1 and 2. Given a PT pair $I^\mdot = \{A \rightarrow F\}$ with cokernel $Q$, we can form the following diagram
\begin{equation} \label{Amindiag0}
\begin{gathered}
\xymatrix
{
& & A \ar^>>>>>{s_\xi}[d] & & \\
0 \ar[r] & F^{D}_{A(S)} \ar[r] & (A/I_C)^{D}_{A(S)} \ar[r] \ar[d] & Q^{D}_{A(S)} \ar[r] \ar@{=}[d] & 0 \\
& & \Ext^1(M,A) \ar[d] \ar@{-->>}^<<<<<{\exists}[r] & Q^{D}_{A(S)}, & \\
& & 0 & &
}
\end{gathered}
\end{equation}
where $((A / I_C)^D_{A(S)}, s_\xi)$ was constructed from \eqref{cosection2} in Step 1. Then $I^\mdot$ lies in the image of the injection of Lemma \ref{lem2} if and only if the indicated surjection exists. We have produced the map \eqref{setmap} at the level of sets.

Applying $(\cdot)^{D}_{A(S)}$ to the middle row of diagram \eqref{Amindiag0} gives back the original PT pair $I^\mdot = \{A \rightarrow F\}$ by \eqref{doubleD}. From this fact, it is easy to construct the inverse of the map described above by starting from a surjection $\Ext^1(M,A) \twoheadrightarrow Q$ and inducing the middle row of diagram \eqref{Amindiag0}. 

\bigskip

\noindent \textbf{Step 4:} Finally we show that the inverse of the set theoretic bijection \eqref{setmap} constructed in the previous step is a morphism of schemes. Let $U = \Spec A$ and let $C \subset U$ be the Cohen-Macaulay curve of Lemma \ref{lem0}. Since we work in the affine setting, we can use module notation as we have been doing so far, but for this step we prefer sheaf notation. Denote by $\F$ the torsion free sheaf corresponding to $M$ and let $B$ be any base $\C$-scheme of finite type. Denote projection by $\pi_U : U \times B \rightarrow U$. Suppose we are given a $B$-flat family of 0-dimensional quotients
$$
\pi_U^* \ext^1(\F,\O_U) \twoheadrightarrow \cQ.
$$
Then we can form the diagram
\begin{equation} \label{Amindiag1}
\begin{gathered}
\xymatrix
{
& & \pi_{U}^{*} \O_C \ar^>>>>>{\pi_U^* s_\xi}[d] & & \\
0 \ar[r] & \cK \ar@{-->}[r] & \pi_U^* (\O_C)^{D}_{\O_U(S)} \ar@{-->}[r] \ar[d] & \cQ \ar[r] \ar@{=}[d] & 0 \\
& & \pi_U^*\ext^1(\F,\O_U) \ar[d] \ar@{->>}[r] & \cQ, & \\
& & 0 & &
}
\end{gathered}
\end{equation}
where $\cK$ is the kernel of the composition
$$
\pi_U^* (\O_C)^{D}_{\O_U(S)} \twoheadrightarrow \pi_U^*\ext^1(\F,\O_U) \twoheadrightarrow \cQ.
$$
We want to dualize the middle row of \eqref{Amindiag1}. Denote derived dual by $(\cdot)^\vee$. Since $C$ is Cohen-Macaulay we have \cite[Prop.~1.1.10]{HL}
$$
\ext^2(\O_C,\O_U)^{\vee} \cong \O_C[-2].
$$
Therefore applying $R\hom(\cdot,\pi_U^*\O_U(S))$ to the middle row of \eqref{Amindiag1} gives the following exact triangle (after a bit of rewriting)
$$
\cQ^\vee \otimes \pi_U^* \O_U(S) \rightarrow \O_{C \times B}[-2] \rightarrow \cK^\vee \otimes \pi_U^* \O_U(S),
$$
or in other words
\begin{equation} \label{trianglefam}
\O_{C \times B} \rightarrow \cK^\vee \otimes \pi_U^* \O_U(S)[2] \rightarrow \cQ^\vee \otimes \pi_U^* \O_U(S)[3].
\end{equation}
We claim that the induced map 
$$
\O_{U \times B} \rightarrow \cK^\vee \otimes \pi_U^* \O_U(S)[2]
$$
is a $B$-flat family of PT pairs. If so, then we have produced from a $B$-flat family of quotients a $B$-flat family of PT pairs and the inverse of the set theoretic map \eqref{setmap} is a morphism of schemes.
We claim that the complexes in \eqref{trianglefam} are all concentrated in degree 0. Indeed for any closed point $b \in B$ the pulled-back sheaves $\cK_b$ and $\cQ_b$ on the fibres $U \times \{b\}$ are pure sheaves of dimension 1 and 0. By \cite[Prop.~1.1.10]{HL}
\begin{align*}
\ext^{i \neq 2}_{\O_{U}}(\cK_b, \O_{U}) &= 0, \\
\ext^{i \neq 3}_{\O_{U}}(\cQ_b, \O_{U}) &= 0.
\end{align*}
Using \cite[Prop.~3.1]{Sch} we obtain
\begin{align*}
\ext^{i \neq 2}_{\O_{U \times B}}(\cK, \O_{U \times B}) &= 0, \\
\ext^{i \neq 3}_{\O_{U \times B}}(\cQ, \O_{U \times B}) &= 0.
\end{align*}
We obtain a short exact sequence
\begin{equation} \label{trianglefam2}
0 \rightarrow \O_{C \times B} \rightarrow \ext^{2}(\cK,\pi_{U}^{*} \O_U(S)) \rightarrow \ext^{3}(\cQ,\pi_{U}^{*} \O_U(S)) \rightarrow 0.
\end{equation}
Furthermore
\begin{align*}
&R\hom(\cK, \pi_U^* \O_U(S))[2], \ R\hom(\cQ,\pi_U^* \O_U(S))[3], \\
&R\hom(\cK_b,\O_U(S))[2], \ R\hom(\cQ_b,\O_U(S))[3],   
\end{align*}
are all concentrated in degree 0 for all closed points $b \in B$. Therefore the terms of \eqref{trianglefam2} are all $B$-flat by cohomology and base change for Ext groups \cite[Prop.~3.1]{Sch}. Finally the exact sequence \eqref{trianglefam2} pulls back to 
\begin{equation*}
0 \rightarrow \O_{C} \rightarrow \ext^{2}(\cK_b,\O_U(S)) \rightarrow \ext^{3}(\cQ_b,\O_U(S)) \rightarrow 0,
\end{equation*}
for all closed points $b \in B$. The second and third terms are pure of dimension 1 and 0 respectively, so we are done.
\end{proof}

\subsection{The higher rank case} \label{higherr}

In this section we prove Theorem \ref{key} for the case $r>2$. This requires a new inductive construction which builds on the rank $r=2$ case.

\begin{proof}[Proof of Theorem \ref{key} for $r>2$]
We set $M_r := M$ and start with the short exact sequence \eqref{cosection2}
$$
0 \rightarrow A(S)^{\oplus (r-1)} \rightarrow M_r(S) \rightarrow I_C \rightarrow 0.
$$
Let $A(S) \hookrightarrow A(S)^{\oplus (r-1)}$ be inclusion of the first factor and let $M_{r-1}(S)$ be the cokernel of the composition
$$
A(S) \rightarrow A(S)^{\oplus (r-1)} \rightarrow M_r(S).
$$
Then we obtain the following diagram in which all rows and columns are short exact sequences, all squares commute, and the bottom row is induced from the rest of the diagram ($3 \times 3$ lemma)
\begin{displaymath}
\xymatrix
{
& 0 \ar[d] & 0 \ar[d] & & \\
& A(S) \ar@{=}[r] \ar[d] & A(S) \ar[d] \\
0 \ar[r] & A(S)^{\oplus (r-1)} \ar[d] \ar[r] & M_r(S) \ar[d] \ar[r] & I_C \ar[r] \ar@{=}[d] & 0 \\
0 \ar@{-->}[r] & A(S)^{\oplus (r-2)} \ar[d] \ar@{-->}[r] & M_{r-1}(S) \ar@{-->}[r] \ar[d] & \ar@{-->}[r] I_C & 0.  \\
& 0 & 0 & &
}
\end{displaymath}
The factor $A(S)^{\oplus (r-2)}$ in the left column is the quotient of the inclusion $A(S) \hookrightarrow A(S)^{\oplus (r-1)}$ of the first factor.
This produces short exact sequences
\begin{align*}
0 &\rightarrow A(S) \rightarrow M_r(S) \rightarrow M_{r-1}(S) \rightarrow 0, \\
0 &\rightarrow A(S)^{\oplus (r-2)} \rightarrow M_{r-1}(S) \rightarrow I_C \rightarrow 0.
\end{align*}
Continuing inductively in this fashion, we obtain $A$-modules 
$$
I_C(-S) = M_1, \, M_2, \ldots, M_{r-1},  \, M_r = M,
$$ 
which fit in short exact sequences  
\begin{align} 
\begin{split} \label{crucial}
0 &\rightarrow A(S) \rightarrow M_{i+1}(S) \rightarrow M_i(S) \rightarrow 0, \\
0 &\rightarrow A(S)^{\oplus j} \rightarrow M_{j+1}(S) \rightarrow I_C \rightarrow 0,
\end{split}
\end{align}
for all $i = 1, \ldots, r-1$ and $j=0,\ldots, r-1$. From these short exact sequences we deduce at once that all modules $M_i$ are torsion free of homological dimension 1. We denote the corresponding extension classes by
$$
\xi_i \in \Ext^1(M_i,A).
$$
Dualizing \eqref{crucial} gives
$$
0 \rightarrow M_i^* \rightarrow M_{i+1}^* \rightarrow A \stackrel{s_{\xi_i}}{\rightarrow} \Ext^1(M_i,A) \rightarrow \Ext^1(M_{i+1},A) \rightarrow 0.
$$
Here the map $s_{\xi_i}$ sends 1 to $\xi_i$. 

Fix an element $Q \in \T_A$. The original cosection \eqref{cosection1} factors as
$$
\sigma : M(S) = M_r(S) \twoheadrightarrow M_{r-1}(S) \twoheadrightarrow \cdots \twoheadrightarrow M_1(S) = I_C.
$$
Using \eqref{crucial} and $\Ext^2(Q,A) = 0$ \cite[Prop.~1.1.6]{HL}, the inclusion of Lemma \ref{lem2} factors as a sequence of inclusions 
\begin{equation} \label{chainincl}
\Ext^2(Q,M_r(S)) \hookrightarrow \cdots \hookrightarrow \Ext^2(Q,M_1(S)) = \Ext^2(Q,I_C).
\end{equation}

\noindent \textbf{Step 1:}  Just like in Step 1 of Section \ref{rank2}, we ask when an element 
$$
I_C \rightarrow I^\mdot \rightarrow Q[-1]
$$ 
of $\Ext^1(Q[-1],I_C)$ lies in the image of $\Ext^2(Q,M_r(S))$ under inclusion \eqref{chainincl}. The same reasoning as in Step 1 of Section \ref{rank2} shows that this is the case if and only if there exist maps 
\begin{displaymath}
\xymatrix
{
\Ext^1(M_1,A) \ar@{->>}[r] \ar[d] & \Ext^1(M_2,A) \ar@{->>}[r] \ar@{-->}^{\exists}[dl] & \cdots \ar@{->>}[r] & \Ext^1(M_r,A) \ar@{-->}^{\exists}[dlll] \\
\Ext^3(Q,A(S)) & & &
}
\end{displaymath}
such that all triangles in the diagram commute. Here the first vertical map is induced by applying $\Hom(\cdot,A(S))$ to $I_C \rightarrow I^{\mdot} \rightarrow Q[-1]$ and recalling that $M_1 = I_C(-S)$. Note that if the maps exist, they are necessarily unique (because all the maps on the top row are surjections). In turn, this is equivalent to the existence of a single map 
\begin{equation}
\begin{gathered} \label{rankrdiag}
\xymatrix
{
\Ext^1(M_1,A) \ar@{->>}[r] \ar[d] & \Ext^1(M_2,A) \ar@{->>}[r] & \cdots \ar@{->>}[r] & \Ext^1(M_r,A) \ar@{-->}^{\exists}[dlll] \\
\Ext^3(Q,A(S)) & & &
}
\end{gathered}
\end{equation}
such that the triangle commutes. 

\bigskip

\noindent \textbf{Step 2:} We claim that an element $I_C \rightarrow I^{\mdot} \rightarrow Q[-1]$ of $\Ext^1(Q[-1],I_C)$ lies in the image of $\Ext^2(Q,M_r(S))^{\pure}$, i.e.~it lies in the image of $\Ext^2(Q,M_r(S))$ under inclusion \eqref{chainincl} \emph{and} corresponds to a PT pair, if and only if the map
$$
\Ext^1(M_r,A) \rightarrow \Ext^3(Q,A(S))
$$
in diagram \eqref{rankrdiag} is a surjection. This is proved just as in Step 2 of Section \ref{rank2}. 

\bigskip

\noindent \textbf{Step 3:} We have constructed a map 
\begin{equation} \label{higherrkmap}
\bigsqcup_{Q \in \T_A} \Ext^2(Q,M_r(S))^{\pure} / \sim \longrightarrow \Quot_A(\Ext^1(M_r,A)).
\end{equation}
Our goal in this step is prove that this is a geometric bijection. In fact we have constructed an entire sequence of maps
\begin{equation} \label{wholeseq}
\begin{gathered}
\xymatrix
{
\bigsqcup_{Q \in \T_A} \Ext^2(Q,M_r(S))^{\pure} / \sim \ar[r] \ar@{^(->}[d] & \Quot_A(\Ext^1(M_r,A)) \ar@{^(->}[d] \\
\bigsqcup_{Q \in \T_A} \Ext^2(Q,M_{r-1}(S))^{\pure} / \sim \ar[r] \ar@{^(->}[d] & \Quot_A(\Ext^1(M_{r-1},A)) \ar@{^(->}[d] \\
\cdots \ar@{^(->}[d] & \cdots \ar@{^(->}[d] \\
\bigsqcup_{Q \in \T_A} \Ext^2(Q,M_2(S))^{\pure} / \sim \ar[r] & \Quot_A(\Ext^1(M_2,A)).
}
\end{gathered}
\end{equation}
All squares commute. The vertical maps on the left are injective as observed in \eqref{chainincl}. Since the maps in the top row of diagram \eqref{rankrdiag} are all surjective, the vertical maps on the right are also injective. The bottom map is a geometric bijection, because we proved the rank 2 case in Section \ref{rank2}. Since \eqref{higherrkmap} is obtained as the restriction of the bottom map, we conclude that \eqref{higherrkmap} is a morphism of schemes. The diagram implies that all horizontal maps are injective. 

It is left to show that \eqref{higherrkmap} is surjective. Suppose we are given a surjection $\Ext^1(M_r,A) \twoheadrightarrow Q$, then we get induced maps
\begin{equation}
\begin{gathered} \label{induced} 
\xymatrix
{
\Ext^1(M_1,A) \ar@{->>}[r] \ar@{-->}^{\exists}[d] & \Ext^1(M_2,A) \ar@{->>}[r] \ar@{-->}^{\exists}[dl] & \cdots \ar@{->>}[r] & \Ext^1(M_r,A) \ar[dlll] \\
Q & & &
}
\end{gathered}
\end{equation}
Since the bottom map of diagram \eqref{wholeseq} is a geometric bijection, there exists a PT pair 
$$
I_C \rightarrow I^{\mdot} \rightarrow Q'[-1]
$$ 
in $\Ext^2(Q',M_2(S))^{\pure}$ mapping to $ \Ext^1(M_2,A) \twoheadrightarrow Q$ in diagram \eqref{induced}. In other words, there exists a commutative diagram  
\begin{equation*}
\begin{gathered}
\xymatrix
{
\Ext^1(I_C,A(S)) \ar@^{=}[r] \ar[d] & \Ext^1(M_1(S),A(S)) \ar[d] \\
\Ext^3(Q',A(S)) \ar^>>>>>>>>>>>{\cong}[r] & Q.
}
\end{gathered}
\end{equation*} 
where the left vertical map is induced by $I_C \rightarrow I^{\mdot} \rightarrow Q'[-1]$, the right vertical map appears in diagram \eqref{induced}, and we recall that $M_1=I_C(-S)$. Hence diagram \eqref{induced} reduces to diagram \eqref{rankrdiag}. This means $I_C \rightarrow I^{\mdot} \rightarrow Q'[-1]$ lies in $\Ext^2(Q',M_r(S))^{\pure}$ and it maps to $\Ext^1(M_r,A) \twoheadrightarrow Q$ because all vertical maps on the right of \eqref{wholeseq} are injective. Hence \eqref{higherrkmap} is a geometric bijection.
\end{proof}

\section{Hall algebra calculation} \label{Hall}

Let $X$ be a smooth projective threefold and let $\F$ be a rank $r$ torsion free sheaf on $X$ of homological dimension $\leq 1$. We assume $\Spec A$ is an affine scheme of one of the following types 
\begin{itemize}
\item \textbf{Case 1:} $\Spec A$ is a Zariski open subset of $X$,
\item \textbf{Case 2:} $\Spec A = \Spec \widehat{\O}_{X,P}$, where $P \in X$ is a closed point. 
\end{itemize}
We denote the $A$-module corresponding to $\F |_{\Spec A}$ by $M$
$$
M := \Gamma(\Spec A, \F|_{\Spec A}).
$$
Our goal is to prove equation \eqref{affinemaineq}, which finishes the proof of Theorem \ref{main} by Proposition \ref{redtolocal}. We will achieve this by combining the geometric bijection of Theorem \ref{key} with a higher rank variation on a Hall algebra computation by Stoppa-Thomas \cite{ST}, which we follow closely in this section. We assume $M$ has homological dimension 1, because otherwise $M$ is locally free in which case we already know \eqref{affinemaineq} (Section \ref{reductionsec}). Throughout this section, $X, \F, A, M$ are fixed in this way.

\subsection{Key lemma}

Like in Section \ref{section2}, we denote the stack of finitely generated 0-dimensional $A$-modules by $\T_A$. The following is the analog of \cite[Lem.~4.10]{ST}.
\begin{lemma} \label{lem1}
For any  $Q \in \T_A$, the only possibly non-zero Ext groups between $M$ and $Q$ are the following finite-dimensional vector spaces: $\Hom(M,Q) \cong \Ext^3(Q,M)^*$ and $\Ext^1(M,Q) \cong \Ext^2(Q, M)^*$. Moreover
$$
\dim \Hom(M,Q) - \dim \Ext^2(Q,M) = r \ell(Q).
$$ 
\end{lemma}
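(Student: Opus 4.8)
The plan is to split the statement into determining which $\Ext$ groups survive and then the numerical identity. I would first use that $M$ has homological dimension $1$: over the regular ring $A$ it admits a locally free resolution $0 \to E_1 \to E_0 \to M \to 0$, so $\Ext^i(M,Q) = 0$ for $i \geq 2$, killing $\Ext^2(M,Q)$ and $\Ext^3(M,Q)$. Since $Q$ is $0$-dimensional (torsion) and $M$ is torsion free, $\Hom(Q,M) = 0$ as well. It then remains to treat $\Hom(M,Q)$, $\Ext^1(M,Q)$, $\Ext^1(Q,M)$, $\Ext^2(Q,M)$, $\Ext^3(Q,M)$ and to produce the claimed dualities.

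The key input is a duality $\Ext^i(M,Q) \cong \Ext^{3-i}(Q,M)^*$. I would obtain it by localizing: each group $\Ext^\bullet(M,Q)$ and $\Ext^\bullet(Q,M)$ is a finite-length module supported on the finite set $\mathrm{Supp}(Q)$, hence finite-dimensional over $\C$ and unaffected by completion at the points of $\mathrm{Supp}(Q)$. This lets me replace $A$ by the complete local rings $\widehat{\O}_{X,P} \cong \C[\![x,y,z]\!]$, each regular (so Gorenstein) of dimension $3$ with residue field $\C$ and trivial canonical module. Grothendieck local duality then yields the pairing with no canonical-bundle twist and with Matlis dual equal to the $\C$-linear dual. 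Reading it off in degrees $i=0,1$ gives $\Hom(M,Q) \cong \Ext^3(Q,M)^*$ and $\Ext^1(M,Q) \cong \Ext^2(Q,M)^*$, while degree $i=2$ gives $\Ext^1(Q,M) \cong \Ext^2(M,Q)^* = 0$ by the first step; this pins down exactly the nonzero groups and their finite-dimensionality. I expect the main obstacle to be making this duality rigorous in the non-proper affine settings of the paper, where global Serre duality is unavailable on a Zariski open subset or on $\Spec \widehat{\O}_{X,P}$: the reduction by completion to $\C[\![x,y,z]\!]$ is what legitimizes local duality and removes the $\omega$-twist that would appear globally on $X$.

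For the identity, the degree-$1$ duality gives $\dim \Ext^2(Q,M) = \dim \Ext^1(M,Q)$, so that
$$
\dim \Hom(M,Q) - \dim \Ext^2(Q,M) = \dim \Hom(M,Q) - \dim \Ext^1(M,Q) = \chi(M,Q),
$$
the higher terms of the Euler pairing vanishing by the first step. Since $\chi(-,Q)$ is additive on short exact sequences, I would compute it from the resolution as $\chi(E_0,Q) - \chi(E_1,Q)$. For a locally free sheaf $E$ and finite-length $Q$ on the affine $\Spec A$ one has $\Ext^{>0}(E,Q) = 0$ and $\dim_\C \Hom(E,Q) = \rk(E)\,\ell(Q)$, whence $\chi(M,Q) = (\rk E_0 - \rk E_1)\,\ell(Q) = r\,\ell(Q)$. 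This final step should be routine once the duality is in place.
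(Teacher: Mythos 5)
Your proposal is correct, but it takes a genuinely different route from the paper's. The paper argues globally: its Step 1 transports all the Ext groups to the ambient projective threefold $X$ via $\cQ := \iota_* Q$ (using the local-to-global spectral sequence when $\Spec A \subset X$ is Zariski open, and a stalk/completion/adjunction argument when $A = \widehat{\O}_{X,P}$), and its Step 2 then applies Serre duality on the projective $X$ to get the vanishings and the isomorphisms $\Hom_X(\F,\cQ) \cong \Ext^3_X(\cQ,\F)^*$, $\Ext^1_X(\F,\cQ) \cong \Ext^2_X(\cQ,\F)^*$, together with $\chi(\F,\cQ) = r\ell(\cQ)$. You instead stay local: completion at the finitely many support points of $Q$ reduces everything to $\C[\![x,y,z]\!]$, where local duality gives the pairing with no $\omega$-twist, and you evaluate the Euler pairing directly from the two-term free resolution. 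Your route treats the paper's two cases for $\Spec A$ uniformly (the paper needs separate arguments for them in Step 1) and never uses the compactification $X$, so it proves the lemma for any three-dimensional regular setting with the right completed local rings; the paper's route gets the duality and the Euler-characteristic computation for free from standard global theorems. The one point needing more care in your write-up is the duality itself: Grothendieck local duality is usually stated as a pairing between local cohomology $H^i_{\m}(N)$ and $\Ext^{3-i}_R(N,R)$, whereas you need the Ext--Ext form $\Ext^i_R(M,Q) \cong \Ext^{3-i}_R(Q,M)^*$. This does follow --- e.g.\ from $R\Hom_R(Q,M) \cong R\Hom_R(M,Q)^{\vee}[-3]$, valid because $M$ is perfect over the regular ring $R$ and $Q$ has finite length (with $(\cdot)^{\vee}$ the Matlis dual), or more elementarily from your resolution combined with $\Ext^{i\neq 3}_R(Q,R)=0$ and $\Ext^3_R(Q,R) \cong Q^{\vee}$ --- but it should be derived rather than quoted as ``local duality'' outright; this is consistent with the paper's own conventions, which invoke local duality precisely to extend the needed results of [HL] beyond the projective case.
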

\begin{proof}
\textbf{Step 1:} Let $\iota : \Spec A \rightarrow X$ be  the natural map and $\cQ := \iota_* Q$. We claim 
\begin{align*}
\Ext^i_A(M,Q) &\cong \Ext^i_X(\F,\cQ), \\
\Ext^i_A(Q,M) &\cong \Ext^i_X(\cQ, \F),
\end{align*}
for all $i$. This will allow us to apply Serre duality on $X$ in Step 2. We prove the first isomorphism; the second follows analogously.

When $\iota : U=\Spec A \subset X$ is a Zariski affine open, the isomorphism follows at once from the local-to-global spectral sequence. Indeed
\begin{align*}
H^p(X,\ext^q_X(\F,\cQ)) \cong H^p(X, \iota_*\ext^q_U(\F|_U,Q)) \cong H^p(U, \ext^q_U(\F|_U,Q)),
\end{align*}
which is only non-zero when $p=0$ because $U$ is affine. The spectral sequence collapses thereby giving the desired result. 

In the case $\Spec A = \Spec \widehat{\O}_{X,P}$, we first observe that
\begin{align*}
\Ext^i_X(\F,\cQ)  \cong \Ext^i_{\O_{X,P}}(\F_{P},\cQ_{P})
\end{align*}
as complex vector spaces, where $\F_P, \cQ_{P}$ denote the stalks. This again follows by using the local-to-global spectral sequence on an affine open neighbourhood of the closed point $P$. Next let 
$$
j : \Spec \widehat{\O}_{X,P} \rightarrow \Spec \O_{X,P}
$$
be induced by formal completion. Then $\cQ_P = j_* Q$ by definition. Note that $j_*$ is exact because $j$ is affine and $j^*$ is exact by \cite[Prop.~10.14]{AM}. By adjunction 
\begin{align*}
\Ext^i_X(\F,\cQ)  \cong \Ext^i_{\O_{X,P}}(\F_{P},j_*Q) \cong \Ext^i_{A}(M,Q).
\end{align*}

\noindent \textbf{Step 2:} Since $\F$ has homological dimension $\leq 1$ and $\cQ$ is 0-dimensional, we have $\Ext^{\geq 2}_{X}(\F,\cQ) = 0$. By Serre duality on $X$ we also have $\Ext^{\leq 1}_{X}(\cQ,\F) = 0$ and the remaining Ext groups are $\Hom_X(\F,\cQ) \cong \Ext^3_X(\cQ,\F)^*$ and $\Ext^1_X(\F,\cQ) \cong \Ext^2_X(\cQ, \F)^*$. Furthermore
\begin{align*}
r \ell(\cQ) = \chi(\F,\cQ) &= \dim \Hom_X(\F,\cQ) - \dim \Ext^1_X(\F,\cQ) \\
&= \dim \Hom_X(\F,\cQ) - \dim \Ext^2_X(\cQ,\F).
\end{align*}
The result follows from Step 1.
\end{proof}

\subsection{The relevant stacks}

We write $\T_X$ for the stack of 0-dimensional sheaves on $X$. Similar to \cite{ST}, we consider the following $\T_X$-stacks
\begin{itemize}
\item $1_{\T_X}$ is the identity map $\T_X \rightarrow \T_X$,
\item $\Hom_X(\F, \cdot)$ is the stack whose fibre over $\cQ \in \T_X$ is $\Hom_X(\F, \cQ)$,
\item $\Hom_X(\F, \cdot)^{\onto}$ is the stack whose fibre over $\cQ \in \T_X$ is $\Hom_X(\F, \cQ)^{\onto}$,
\item $\Ext^2_X(\cdot,\F)$ is the stack whose fibre over $\cQ \in \T_X$ is $\Ext^2_X(\cQ, \F)$,
\item $\C^{r\ell(\cdot)}$ is the stack whose fibre over $\cQ \in \T_X$ is $\C^{r\ell(\cQ)}$.
\end{itemize}
Denote by
$$
\iota : \Spec A \rightarrow X
$$
the natural map. Recall that we introduced two cases at the beginning of this section. In Case 1, $\iota_*$ induces an isomorphism of $\T_A$ onto the \emph{open substack} of $\T_X$ of sheaves supported on $\Spec A \subset X$. In Case 2, $\iota_*$ induces an isomorphism of $\T_A$ onto the \emph{closed substack} of $\T_X$ of sheaves supported at the point $P \in X$. Recall that in the proof of Lemma \ref{lem1} we showed that
\begin{align*}
\Ext^i_A(M,Q) &\cong \Ext^i_X(\F,\cQ), \\
\Ext^i_A(Q,M) &\cong \Ext^i_X(\cQ, \F),
\end{align*}
for all $Q \in \T_A$, $i=0,1,2,3$, and $\cQ := \iota_* Q$. Pull-back along
$$
\iota_* : \T_A \hookrightarrow \T_X
$$
gives rise to the following $\T_A$-stacks:
\begin{itemize}
\item $\Hom_A(M, \cdot)$ is the stack whose fibre over $Q \in \T_A$ is $\Hom_A(M, Q)$,
\item $\Hom_A(M, \cdot)^{\onto}$ is the stack whose fibre over $Q \in \T_A$ is $\Hom_A(M, Q)^{\onto}$,
\item $\Ext^2_A(\cdot,M)$ is the stack whose fibre over $Q \in \T_A$ is $\Ext^2_A(Q, M)$.
\end{itemize}
In Case 1, these are open substacks of $\Hom_X(\F, \cdot)$,  $\Hom_X(\F, \cdot)^{\onto}$, $\Ext^2_X(\cdot,\F)$. In Case 2 these are closed substacks of $\Hom_X(\F, \cdot)$,  $\Hom_X(\F, \cdot)^{\onto}$, $\Ext^2_X(\cdot,\F)$. As such, we will view them as $\T_X$-stacks. 

Next we fix a cosection $M \rightarrow I_C(-S)$ as in \eqref{cosection2} (which always exist by Theorem \ref{Bourbaki} and Lemma \ref{lem0}!). By Lemma \ref{lem2} we get an induced injection
$$
\Ext^2_A(Q,M(S)) \hookrightarrow \Ext^2_A(Q,I_C) \cong  \Ext^1_A(Q,A/I_C),
$$
for all $Q \in \T_A$. In equation \eqref{locus} we defined $\Ext^2_A(Q, M(S))^{\pure}$ as the intersection of $\Ext^2_A(Q,M(S))$ with the locus of PT pairs $\Ext^1_A(Q,A/I_C)^{\pure}$. This gives a substack $\Ext^2_A(Q, M(S))^{\pure} \subset \Ext^2_A(Q, M(S))$. By applying $\cdot \otimes_A A(S)$, we obtain an isomorphism of stacks $\Ext^2_A(\cdot,M) \cong \Ext^2_A(\cdot,M(S))$ (where we use that $Q(S)$ is 0-dimensional for all $Q \in \T_A$). Pulling back the substack $\Ext^2_A(\cdot, M(S))^{\pure} \subset \Ext^2_A(\cdot,M(S))$ along this isomorphism gives a substack $$\Ext^2_A(\cdot, M)^{\pure} \subset \Ext^2_A(\cdot,M)$$ and clearly $\Ext^2_A(\cdot, M)^{\pure} \cong \Ext^2_A(\cdot,M(S))^{\pure}$. 
Furthermore, as above, we can view $\Ext^2_A(\cdot, M)^{\pure}$ as a substack of $\Ext^2_X(\cdot, \F)$. As such, $\Ext^2_A(\cdot,M)^{\pure}$ will also be viewed as a $\T_X$-stack. Although our constructions depend on the choice of cosection \eqref{cosection2} our final formula \eqref{affinemaineq} will not depend on this choice.

Denote by $H(\T_X) := K(\mathrm{St}/\T_X)$ the Grothendieck group of $\T_X$-stacks (locally of finite type over $\C$ and with affine geometric stabilizers). We refer to \cite{Bri1,Bri2,Joy1,Joy2,KS, ST} for general background on Hall algebra techniques. The group $H(\T_X)$ can be endowed with a product as follows. Let $\T^2_X$ be the stack of short exact sequences 
$$
0 \rightarrow \cQ_1 \rightarrow \cQ \rightarrow \cQ_2 \rightarrow 0
$$ 
in $\T_X$. For $i=1,2$, denote by $\pi_i$ the map that sends this short exact sequence to $\cQ_i$. We write $\pi$ for the map which sends this short exact sequence to $\cQ$. For any two ($\T_X$-isomorphism classes of) $\T_X$-stacks $[U \rightarrow \T_X]$ and $[V \rightarrow \T_X]$, the product $[U * V \rightarrow \T_X]$ is defined by the following Cartesian diagram
\begin{equation*}
\xymatrix
{
U*V \ar[d] \ar[r] & \T_X^2 \ar^{\pi_1 \times \pi_2}[d] \ar^{\pi}[r] & \T_X \\
U \times V \ar[r] & \T_X \times \T_X.
}
\end{equation*}
Then $(H(\T_X),*)$ is an associative algebra, known as a \emph{Joyce's motivic Ringel-Hall algebra}. The unit w.r.t.~$*$ is the $\T_X$-stack $1_0 := [\{0\} \subset \T_X]$ where $0$ denotes the zero sheaf. Let $\T_X^\circ = \T_X \setminus \{0\}$. Then 
$$
1_{\T_X} = 1_0 + 1_{\T_X^\circ}
$$ 
is invertible with inverse
$$
1_{\T_X} = 1_0 - 1_{\T_X^\circ} + 1_{\T_X^\circ} * 1_{\T_X^\circ} - \cdots
$$

We denote by 
$$
P_z(\cdot) : H(\T_X) \longrightarrow \Q(z)[\![q]\!]
$$
the virtual Poincar\'e polynomial. Here $z$ is the variable of the virtual Poincar\'e polynomial and $q$ keeps track of an additional grading as follows. Any element $[U \rightarrow \T_X] \in H(\T_X)$ is locally of finite type and can have infinitely many components. Let $\T_{X,n} \subset \T_X$ be the substack of 0-dimensional sheaves of length $n$ and define
$$
P_z(U) := \sum_{n=0}^{\infty} P_z(U \times_{\T_X} \T_{X,n}) \, q^n.
$$
Then $P_z(\cdot)$ is a \emph{Lie algebra homomorphism} to the abelian Lie algebra $\Q(z)[\![q]\!]$ by \cite[Thm.~4.32]{ST}.

\subsection{Proof of equation \eqref{affinemaineq}} \label{Hallfinal}

\begin{proof}[Proof of equation \eqref{affinemaineq}]
The inclusion $\iota_* : \T_A \hookrightarrow \T_X$ is a $\T_X$-stack, which we denote by $1_{\T_A}$. Just like $1_{\T_X}$, it is an invertible element of $(H(\T_X),*)$. 

By the inclusion-exclusion principle, $\Hom_A(M,Q)^{\onto}$ can be written as
$$
\Hom_A(M,Q) - \bigsqcup_{Q_1 < Q} \Hom_A(M,Q_1) + \bigsqcup_{Q_1 < Q_2 <Q} \Hom_A(M,Q_1) - \cdots,
$$
where $<$ denotes strict inclusion. This leads to Bridgeland's generalization of Reineke's formula in our setting (see \cite{ST, Bri2} for details)
\begin{equation} \label{BridgelandReineke}
\Hom_A(M,\cdot) = \Hom_A(M,\cdot)^{\onto} * 1_{\T_A},
\end{equation}
where we view all stacks as $\T_X$-stacks. We also use the following identity\footnote{The proof of this identity goes as follows. First we observe that $1_{\T_A} * \Ext^1_A(\cdot, A/I_C)^{\pure} = \Ext^1_A(\cdot,A/I_C)$ as in \cite{ST}. This comes from a geometric bijection from LHS to RHS. An object of LHS consists of a short exact sequence $0 \rightarrow Q_1 \rightarrow Q \rightarrow Q_2 \rightarrow 0$ in $\T_A$ together with a PT pair $0 \rightarrow A/I_C \rightarrow F \rightarrow Q_2 \rightarrow 0$. To these data, we assign the induced short exact sequence given by $0 \rightarrow A/I_C \rightarrow F \times_{Q_2} Q  \rightarrow Q \rightarrow 0$. This geometric bijection restricts to a geometric bijection $1_{\T_A} * \Ext^2_A(\cdot, M)^{\pure}  \rightarrow \Ext^2_A(\cdot,M)$. This follows by noting that $0 \rightarrow A/I_C \rightarrow F \rightarrow Q_2 \rightarrow 0$ satisfies the property described in Step 1 of Sections \ref{rank2} and \ref{higherr} if and only if $0 \rightarrow A/I_C \rightarrow F \times_{Q_2} Q  \rightarrow Q \rightarrow 0$ satisfies this property.} from \cite{ST}
$$
\Ext^2_A(\cdot,M) = 1_{\T_A} * \Ext^2_A(\cdot, M)^{\pure}.
$$
Using the fact that $P_z$ is a Lie algebra homomorphism we obtain
\begin{align*}
P_z \big(\Ext^2_A(\cdot,M)^{\pure} \big)(z^r q) &= P_z \big(1_{\T_A}^{-1} * \Ext^2_A(\cdot,M) \big)(z^r q) \\
&= P_z \big(\Ext^2_A(\cdot,M) * 1_{\T_A}^{-1} \big)(z^r q) \\
&=P_z \Big( \big(\Ext^2_A(\cdot,M) \oplus \C^{r\ell(\cdot)}\big) *  \big(\C^{r\ell(\cdot)} \big)^{-1} \Big)(q).
\end{align*}
Over strata in $\T_X$ where $\Hom_A(M,\cdot)$ is constant, the stacks 
$$
\Hom_A(M,\cdot), \ \Ext^2_A(\cdot,M) \oplus \C^{r\ell(\cdot)}
$$
are both Zariski locally trivial of the same rank by Lemma \ref{lem1}. We deduce
\begin{align}
\begin{split} \label{intermed1}
P_z \big(\Ext^2_A(\cdot,M)^{\pure} \big)(z^r q) &=P_z \Big( \big(\Ext^2_A(\cdot,M) \oplus \C^{r\ell(\cdot)}\big) *  \big(\C^{r\ell(\cdot)} \big)^{-1} \Big)(q) \\
&= P_z\big( \Hom_A(M,\cdot) * (\C^{r\ell(\cdot)})^{-1} \big)(q).
\end{split}
\end{align}
We also have
\begin{align}
\begin{split} \label{intermed2}
P_z\big(\Hom_A(M, \cdot)^{\onto} \big)(q) &= P_z \big(\Hom_A(M, \cdot) * 1_{\T_A}^{-1} \big)(q) \\
&= P_z \Big( \big(\Hom_A(M, \cdot) * (\C^{r\ell(\cdot)})^{-1} \big) * \big(\C^{r\ell(\cdot)} * 1_{\T_A}^{-1}\big) \Big)(q). 
\end{split}
\end{align}
Define 
\begin{align*}
U &:= \Hom_A(M, \cdot) * (\C^{r\ell(\cdot)})^{-1}, \\
V &:=\C^{r\ell(\cdot)} * 1_{\T_A}^{-1}.
\end{align*}
By \cite[Thm.~4.34]{ST}, if both $\lim_{z \rightarrow 1} P_z(U)$ and $\lim_{z \rightarrow 1} P_z(V)$ exist, then we have
\begin{equation} \label{lims}
\lim_{z \rightarrow 1} P_z(U*V) = \lim_{z \rightarrow 1} P_z(U) \, \lim_{z \rightarrow 1} P_z(V).
\end{equation}
By \eqref{intermed1} we have
\begin{align*}
\lim_{z \rightarrow 1} P_z(U)(q) &= \lim_{z \rightarrow 1} P_z \big(\Ext^2_A(\cdot,M)^{\pure} \big)(z^rq) \\
&= \sum_{n=0}^{\infty} e \big( \Quot_A(\Ext^1_A(M,A),n) \big) \, q^n, 
\end{align*}
where the second line follows from $\Ext^2_A(\cdot, M)^{\pure} \cong \Ext^2_A(\cdot,M(S))^{\pure}$ and Theorem \ref{key}. Since we have $\C^{r \ell(\cdot)} \cong \Hom_A(A^{\oplus r}, \cdot)$, the analog of \eqref{BridgelandReineke} with $M$ replaced by $A^{\oplus r}$ gives
\begin{align*}
\lim_{z \rightarrow 1} P_z (V)(q) &= \lim_{z \rightarrow 1} P_z \big(\Hom_A(A^{\oplus r}, \cdot)^{\onto} \big)(q) \\
&= \sfM(q)^{r e(\Spec A)}, 
\end{align*}
where the second line follows from Cheah's formula (see the proof of Proposition \ref{locfree}). Finally \eqref{intermed2} yields
\begin{align*}
\lim_{z \rightarrow 1} P_z(U*V) &= \lim_{z \rightarrow 1}P_z\big(\Hom(M, \cdot)^{\onto}\big)(q) \\
&= \sum_{n=0}^{\infty} e\big(\Quot_A(M,n)\big) \, q^n.
\end{align*}
Therefore equation \eqref{lims} implies the formula we want to prove
\begin{equation*} 
\sum_{n=0}^{\infty} e\big(\Quot_A(M,n)\big) \, q^n = \sfM(q)^{r e(\Spec A)} \sum_{n=0}^{\infty} e\big(\Quot_A(\Ext^1(M,A),n)\big) \, q^n. \qedhere
\end{equation*}

\vspace{-1.1cm}

\end{proof}

\vspace{0.5cm}

\bibliographymark{References}

\providecommand{\bysame}{\leavevmode\hbox to3em{\hrulefill}\thinspace}
\providecommand{\arXiv}[2][]{\href{https://arxiv.org/abs/#2}{arXiv:#1#2}}
\providecommand{\MR}{\relax\ifhmode\unskip\space\fi MR }
\providecommand{\MRhref}[2]{%
  \href{http://www.ams.org/mathscinet-getitem?mr=#1}{#2}
}
\providecommand{\href}[2]{#2}

\end{document}